\newcommand{\remove}[2]{#1 \setminus #2}
\newcommand{\ignore}[1]{}
\newcommand{\peqT}{\peq_{\type{}}}
\newcommand{\sqsubT}{\sqsubseteq_{\type{}}}
\newcommand{\ST}{\mathrel S_{\type{}}}
\newcommand{\AxCons}[2]{{\rm CD}(#1,#2)}
\newcommand{\AxBInd}[2]{{\rm BI}(#1,#2)}
\newcommand{\logbasic}{{\bf ITL}^0}
\newcommand{\loghomeo}{{\bf ITL}^{\bf FS}}
\newcommand{\logexp}{{\bf ITL}^{\bf CD}}
\newcommand{\logpers}{{\bf ITL}^{1}}
\newcommand{\itle}{{\bf ITL^e}}
\newcommand{\itlp}{{\bf ITL^p}}
\newcommand{\tnext}{\circ}
\newcommand{\ubox}{\Box}
\newcommand{\diam}{\Diamond}
\newcommand{\val}[1]{\lb #1 \rb}
\newcommand{\type}[1]{\mathbb T_{ #1 }}
\newcommand{\peq}{\preccurlyeq}
\newcommand{\seq}{\succcurlyeq}
\newcommand{\acc}{\peq}
\def\lb{\left\llbracket}
\def\rb{\right\rrbracket}
\def\<{\left (}
\def\>{\right )}
\def\({\left (}
\def\){\right )}
\def\cbra{\left \{}
\def\cket{\right \}}
\def\eqdef{\stackrel{\rm def}{=}}
\newtheorem{theorem}{Theorem}
\newtheorem{lemma}{Lemma}
\newtheorem{example}{Example}
\newtheorem{definition}{Definition}
\newtheorem{proposition}{Proposition}
\newtheorem{corollary}{Corollary}
\newtheorem{question}{Question}
\newtheorem{remark}{Remark}
\title{Axiomatic systems and topological semantics for intuitionistic temporal logic}
\author[1]{Joseph Boudou \footnote{\href{mailto:joseph.boudou@irit.fr}{\tt joseph.boudou@irit.fr}}}
\author[2]{Mart\'{\i}n Di\'eguez \footnote{\href{mailto:martin.dieguez@enib.fr}{\tt martin.dieguez@enib.fr}}}
\author[3]{David Fern\'andez-Duque \footnote{\href{mailto:david.fernandezduque@ugent.be}{\tt david.fernandezduque@ugent.be}}}
\author[1]{Fabi\'an Romero \footnote{\href{mailto:Fabian.Romero@irit.fr}{\tt Fabian.Romero@irit.fr}}}
\affil[1]{IRIT, Toulouse University. Toulouse, France}
\affil[2]{CERV, ENIB,LAB-STICC. Brest, France}
\affil[3]{Department of Mathematics, Ghent University. Ghent, Belgium}
\begin{document}
\maketitle

\begin{abstract}
We propose four axiomatic systems for intuitionistic linear temporal logic and show that each of these systems is sound for a class of structures based either on Kripke frames or on dynamic topological systems. Our topological semantics features a new interpretation for the `henceforth' modality that is a natural intuitionistic variant of the classical one. Using the soundness results, we show that the four logics obtained from the axiomatic systems are distinct. Finally, we show that when the language is restricted to the `henceforth'-free fragment, the set of valid formulas for the relational and topological semantics coincide.
\end{abstract}

\section{Introduction}

Intuitionistic logic enjoys a myriad of interpretations based on computation, information or topology, making it a natural framework to reason about dynamic processes in which these phenomena play a crucial role.
In the areas of {nonmonotonic reasoning,} {knowledge representation (KR),} and {artificial intelligence,} intuitionistic and intermediate logics have played an important role within the successful {answer set programming (ASP)}~\cite{Brewka11} paradigm for practical KR. Great part of its success is due to the impressive advances in implementation of efficient solvers~\cite{LPF+06,gekakasc14b} and its use in a wide range of domains such as {computational biology}~\cite{GebserSTV11}, spatial reasoning~\cite{WBS15}, or configuration~\cite{gekasc11c}.

Central to this paradigm is {equilibrium logic}~\cite{Pearce96}, which characterises the ASP semantics in terms of the intermediate logic of {here and there}~\cite{Hey30} plus a minimisation criterion. Such a definition has led to several extensions of modal ASP~\cite{CP07,CerroHS15} that are supported by intuitionistic-based modal logics like {temporal here and there}~\cite{BalbianiDieguezJelia} and are crucial when characterising the theorem of {strong equivalence}~\cite{LPV01,JH03,CabalarD14}.

There are also several potential applications for intuitionistic temporal logics that are unrelated to ASP.
Davies~\cite{Davies96} has suggested an extension of the Curry-Howard isomorphism \cite{DeGroote1995} to partially evaluated programs by adding a next-time operator $\tnext$.
Maier~\cite{Maier2004Heyting} observed that an intuitionistic temporal logic with infinitary operators including a henceforth operator $\ubox$ could be used for reasoning about safety and liveness conditions in possibly-terminating reactive systems.
Fern\'andez-Duque~\cite{FernandezITLc} has suggested that a logic with `eventually' can be used to provide a decidable framework in which to reason about topological dynamics. It is thus surprising that the computational and proof-theoretic properties of these logics are far from being well-understood.

\paragraph{State-of-the-art.}
There have, however, been some notable efforts in this direction. Kojima and Igarashi \cite{KojimaNext} endowed Davies' logic with Kripke semantics and provided a complete deductive system.
Bounded-time versions of logics with henceforth were later studied by Kamide and Wansing \cite{KamideBounded}.
Both use semantics based on Simpson's bi-relational models for intuitionistic modal logic \cite{Simpson94}.
Since then, Balbiani and the authors have shown that temporal here-and-there is decidable and enjoys a natural axiomatization \cite{BalbianiDieguezJelia}.
They have identified two natural, semantically-defined intuitionistic temporal logics, $\itle$ and $\itlp$, studied bisimulations for these logics \cite{IMLA}, and shown $\itle$ to be decidable \cite{BoudouCSL}. However, the decision procedure does not provide a natural axiomatization, and moreover the decidability of $\itlp$ remains open, despite the latter logic being attractive due to it validating the familiar Fischer Servi axioms \cite{FS84}.

Topological semantics for intuitionistic modal and tense logics have also been studied by Davoren et al.~\cite{DavorenIntuitionistic,Davoren2009}, and Kremer suggested an intuitionistic variant of $\bf LTL$ \cite{KremerIntuitionistic} similar to {dynamic topological logic} ($\bf DTL$) \cite{arte,kmints}. $\bf DTL$ is a tri-modal system which gained interest due to its potential applications to automated theorem proving for topological dynamics, but was later shown to be undecidable \cite{konev}. On the other hand, the decidability of Kremer's intuitionistic temporal logic remains open, but Fern\'andez-Duque has shown that a logic with `eventually' $\diam$ instead of $\ubox$ is decidable \cite{FernandezITLc}. Both intuitionistic temporal logics can be seen as sublogics of $\bf DTL$ via the G\"odel-Tarski translation \cite{tarski}.

\paragraph{Our contribution.}
The above decidability results for intuitionistic temporal logics are based on semantical methods. The primary goal of this paper is to lay the groundwork for an axiomatic treatment of intuitionistic linear temporal logics.
We will introduce a `minimal' intuitionistic temporal logic, $\logbasic$, defined by adding standard axioms of $\bf LTL$ to intuitionistic propositional logic. We also consider additional Fischer Servi axioms and a `constant domain' axiom $\ubox (p\vee q) \to \ubox p \vee \diam q$.
Combining these, we obtain four intuitionistic temporal logics. As we will see, each of these logics is sound for a class of structures; the two logics with the constant domain axiom are sound for the class of dynamic posets, and the Fischer Servi axioms correspond to backwards-confluence of the transition function.

The constant domain axiom is not derivable from the others, and to show this, we will consider topological semantics for intuitionistic temporal logic.
As our axioms involve both $\diam$ and $\ubox$, we would like to be able to interpret both tenses.
Kremer observed that his semantics for $\ubox$ do not satisfy some key validities of $\bf LTL$, namely $\ubox p\to\tnext \ubox p$, $\ubox\circ p\to \tnext \ubox p$, and $\ubox p\to \ubox\ubox p$. This makes a proof-theoretic treatment of Kremer's logic difficult, as $\ubox \varphi \to \tnext \ubox \varphi$ is one of the defining properties of $\ubox$ and it is hard to tell what weaker principle could replace it.

To avoid this issue, we propose an alternative interpretation for $\ubox$. Our approach is natural from an algebraic perspective, as we define the interpretation of $\ubox \varphi$ via a greatest fixed point in the Heyting algebra of open sets. On the other hand, this fixed point is not definable in the classical language and hence we no longer obtain a sub-logic of $\bf DTL$. We will show that dynamic topological systems provide semantics for the logics without the constant domain axiom, from which we conclude the independence of the latter. Moreover, we show that the Fischer Servi axioms are valid for the class of {\em open} dynamical topological systems.

The constant domain axiom shows that the $\{\diam,\ubox\}$-logic of dynamic posets is different from that of dynamic topological systems. We show via an alternative axiom that the $\{\tnext,\ubox\}$-logics are also different. On the other hand, our main technical contribution is a proof that the $\{\tnext,\diam\}$-logics coincide, for which we use quasimodels, introduced in the context of intuitionistic temporal logics by Fern\'andez-Duque \cite{FernandezITLc}. This suggests that a completeness proof as in \cite{dtlaxiom} could be adapted to give a complete deductive calculus for the $\{\tnext,\diam\}$-logic over both the class of dynamic posets and the class of dynamic topological systems.

\paragraph{Layout.} Section \ref{SecBasic} introduces the syntax and the four axiomatic systems we propose for intuitionistic temporal logic. Section \ref{SecTopre} reviews dynamic topological systems, which are used in Section \ref{SecSemantics} to provide semantics for our formal language. Section \ref{SecSound} shows that each of the four logics is sound for a class of dynamical systems. These soundness results are used in Section \ref{SecInd} to show that the four logics are pairwise distinct. Section \ref{SecNDQ} reviews non-deterministic quasimodels, which are used in Section \ref{SecCons} to show that the topological and the Kripke $\{\tnext,\diam\}$-logics coincide. Finally, Section \ref{SecConc} lists some open questions.

\section{Syntax and axiomatics}\label{SecBasic}

In this section we will introduce four natural intuitionistic temporal logics. All of the axioms have appeared either in the intuitionistic logic, the temporal logic, or the intuitionistic modal logic literature. They will be based on the language of linear temporal logic, as defined next.

Fix a countably infinite set $\mathbb P$ of `propositional variables'. The language $\mathcal L$ of intuitionistic (linear) temporal logic $\bf ITL$ is given by the grammar
\[ \bot \  | \   p  \ |  \ \varphi\wedge\psi \  |  \ \varphi\vee\psi  \ |  \ \varphi\to\psi  \ |  \ \tnext\varphi \  | \  \diam\varphi \  |  \ \ubox\varphi, \]
where $p\in \mathbb P$. As usual, we use $\neg\varphi$ as a shorthand for $\varphi\to \bot$ and $\varphi \leftrightarrow \psi$ as a shorthand for $(\varphi \to \psi) \wedge (\psi \to \varphi)$. We read $\tnext$ as `next', $\diam$ as `eventually', and $\ubox$ as `henceforth'.
Given any formula $\varphi$,
we denote the set of subformulas of $\varphi$ by ${\mathrm{sub}}(\varphi)$ and its length by $|\varphi|$.
The language $\mathcal L_{\diam}$ is defined as the sublanguage of $\mathcal L$ without the modality $\ubox$.
Similarly, $\mathcal L_{\ubox}$ is the language without $\diam$.


We begin by establishing our basic axiomatization. 
It is obtained by adapting the standard axioms and inference rules of $\bf LTL$ \cite{temporal}, as well as their dual versions,
to propositional intuitionistic logic \citep{MintsInt}.
The logic $\logbasic$ is the least set of $\mathcal L$-formulas closed under the following rules and axioms.
\begin{enumerate}[itemsep=0pt,label=(\roman*)]
\item\label{ax01Taut} All intuitionistic tautologies.
\item\label{ax02Bot} $\neg \tnext \bot$
\item\label{ax03NexWedge} $\tnext \left( \varphi \wedge \psi \right) \leftrightarrow \left(\tnext \varphi \wedge\tnext \psi\right)$;
\item\label{ax04NexVee} $\tnext \left( \varphi \vee \psi \right) \leftrightarrow \left(\tnext \varphi \vee\tnext \psi\right)$;
\item\label{ax05KNext} $\tnext\left( \varphi \rightarrow \psi \right) \rightarrow \left(\tnext\varphi \rightarrow \tnext\psi\right)$;
\item\label{ax06KBox} $\ubox \left( \varphi \rightarrow \psi \right) \rightarrow \left(\ubox \varphi \rightarrow \ubox \psi\right)$;
\item\label{ax07:K:Dual} $\ubox \left( \varphi \rightarrow \psi \right) \rightarrow \left(\diam \varphi \rightarrow \diam \psi\right)$;
\item\label{ax08DiamVee} $\diam \left( \varphi \vee \psi \right) \rightarrow \left(\diam \varphi \vee\diam \psi\right)$;
\item\label{ax09BoxFix} $\ubox \varphi \to \varphi \wedge \tnext \ubox \varphi$;
\item\label{ax10DiamFix} $\varphi \vee \tnext \diam \varphi \to \diam \varphi$;
\item\label{ax11:ind:1} from ${ \varphi \rightarrow \tnext \varphi }$ infer ${ \varphi \rightarrow \ubox \varphi }$;
\item\label{ax12:ind:2} from ${ \tnext \varphi \to \varphi}$ infer ${ \diam \varphi \rightarrow \varphi } $;
\item\label{ax13MP} from $\varphi$ and $\varphi\to \psi$ infer $\psi$;
\item\label{ax14NecCirc} from $\varphi$ infer ${\tnext\varphi}$.
\end{enumerate}

However, modal intuitionistic logics typically involve additional axioms, due to Fischer Servi \cite{FS84}, in order to strengthen the ties with first-order intuitionistic logic. Thus we may also consider logics with the latter; for ${\rm FS}_\tnext$, recall that $\tnext$ is self-dual.
\medskip

\noindent $\begin{array}{ll}
({\rm FS}_\tnext (\varphi,\psi)) & \left(\tnext \varphi \rightarrow \tnext \psi \right) \to \tnext \left(\varphi \rightarrow \psi\right),\\	  
({\rm FS}_\diam (\varphi,\psi)) & \left( \diam \varphi \rightarrow \ubox \psi \right) \to \ubox \left(\varphi \rightarrow \psi\right).
\end{array}$
\medskip

Finally, we consider additional axioms reminiscent of constant domain axioms in first-order intuitionistic logic. As we will see, in the context of intuitionistic temporal logics, these axioms separate Kripke semantics from the more general topological semantics.
\medskip

\noindent $\begin{array}{ll}
(\AxCons \varphi\psi ) & \ubox( \varphi \vee \psi) \to \ubox \varphi \vee \diam \psi,\\	  
(\AxBInd \varphi\psi ) & \ubox ( \varphi \vee \psi) \wedge \ubox (\tnext \psi\rightarrow \psi) \rightarrow \ubox \varphi \vee \psi .
\end{array}$
\medskip

\noindent Here, $\rm CD$ stands for `constant domain' and $\rm BI$ for `backward induction'.
The axiom $\rm BI$ is meant to be a $\diam$-free approximation to $\rm CD$, as witnessed by the following.

\begin{proposition}\label{PropConstoBI} $\logbasic \vdash \AxCons pq \rightarrow \AxBInd pq.$
\end{proposition}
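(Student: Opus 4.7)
The plan is to reduce $\AxCons pq\to\AxBInd pq$ to one temporal-induction step. Assuming $\AxCons pq$ and the conjunction $\ubox(p\vee q)\wedge\ubox(\tnext q\to q)$, the first conjunct together with $\AxCons pq$ gives $\ubox p\vee\diam q$. The left disjunct already yields the desired $\ubox p\vee q$, so the real work concentrates on the right one: it suffices to prove inside $\logbasic$ the key claim $\diam q\wedge\ubox(\tnext q\to q)\to q$, equivalently $\diam q\to\varphi$ with $\varphi\eqdef\ubox(\tnext q\to q)\to q$.

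To establish this I would target the induction rule (xii) applied to $\varphi$. The premise $\tnext\varphi\to\varphi$ is derivable as a theorem: by (v), $\tnext\varphi$ entails $\tnext\ubox(\tnext q\to q)\to\tnext q$; axiom (ix) instantiated at $\tnext q\to q$ supplies both $\ubox(\tnext q\to q)\to\tnext\ubox(\tnext q\to q)$ and $\ubox(\tnext q\to q)\to(\tnext q\to q)$, and chaining these with intuitionistic propositional reasoning closes $\tnext\varphi\to\varphi$. Rule (xii) then produces $\diam\varphi\to\varphi$.

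It remains to bridge $\diam q$ and $\diam\varphi$. Since $q\to\varphi$ is an intuitionistic tautology, I would first derive $\ubox$-necessitation as an admissible rule of $\logbasic$: given a theorem $\psi$, rule (xiv) makes $\tnext\psi$ a theorem, hence so is $\psi\to\tnext\psi$; rule (xi) then yields $\psi\to\ubox\psi$, and modus ponens gives $\ubox\psi$. Applied to $q\to\varphi$ this produces $\ubox(q\to\varphi)$, and axiom (vii) turns it into $\diam q\to\diam\varphi$; composing with $\diam\varphi\to\varphi$ closes the key claim, and the proposition follows by the propositional case analysis above.

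The one delicate point to keep track of is that the derivations of $\tnext\varphi\to\varphi$ and of the $\ubox$-necessitation step must be carried out purely in $\logbasic$, independent of the outer hypothesis $\AxCons pq$, so that the non-propositional rules (xi), (xii), and (xiv) can be legitimately applied; this is evident from the shape of the arguments above, so no further subtlety arises.
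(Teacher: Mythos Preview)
Your proof is correct and follows the same outline as the paper's argument: apply $\AxCons pq$ to obtain $\ubox p\vee\diam q$, then use the hypothesis $\ubox(\tnext q\to q)$ together with rule~\ref{ax12:ind:2} to collapse $\diam q$ to $q$. The paper's proof is a two-line sketch that simply cites~\ref{ax12:ind:2} at this point; your version spells out carefully how that rule---which applies only to theorems, not to local hypotheses---can legitimately be invoked, by absorbing the assumption $\ubox(\tnext q\to q)$ into the formula $\varphi$ and deriving $\tnext\varphi\to\varphi$ outright in $\logbasic$. This extra care is warranted and the derivation you give goes through.
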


\proof
We reason within $\logbasic$.
Assume that
\begin{enumerate*}[label=(\arabic*)]
\item\label{ItDerivOne} $\AxCons pq$ holds, along with

\item\label{ItDerivTwo} $\ubox (\tnext q\rightarrow q)$ and
\item\label{ItDerivThree} $\ubox ( p\vee q)$.
\end{enumerate*}
From \ref{ItDerivOne} and \ref{ItDerivThree} we obtain $\ubox p \vee \diam q$, which together with \ref{ItDerivTwo} and axiom \ref{ax12:ind:2} gives us $\ubox p \vee q$, as needed.
\endproof

\noindent With this, we define the following logics:
\begin{align*}
  \loghomeo &\equiv \logbasic + {\rm FS}_\tnext + {\rm FS}_\diam, \\
  \logexp   &\equiv  \logbasic + {\rm CD},\\
  \logpers  &\equiv  \loghomeo +  \logexp.
\end{align*}
We are also interested in logics over sublanguages of $\mathcal L$.
For any logic $\Lambda $ defined above, let $\Lambda_\diam$ be the logic obtained by restricting all rules and axioms to $\mathcal L_\diam$, and let $\Lambda_\ubox$ be defined by restricting similarly to $\mathcal L_\ubox$, except that when $\rm CD$ is an axiom of $\Lambda$, we add the axiom $\rm BI$ to $\Lambda_\ubox$.

\section{Dynamic topological systems}\label{SecTopre}

The four logics defined above are pairwise distinct.
We will show this by introducing semantics for each of them.
They will be based on dynamic topological systems (or dynamical systems for short), which, as was observed in \cite{FernandezITLc}, generalize their Kripke semantics \cite{BoudouCSL}.

\subsection{Topological spaces and continuous functions}

Let us recall the definition of a {\em topological space}~\cite{munkres2000}:

\begin{definition}
A {\em topological space} is a pair $\< X ,\mathcal{T}\>,$ where $X$ is a set and $\mathcal T$ a family of subsets of $X$ satisfying
\begin{enumerate}[label=(\alph*)]
\item $\varnothing,X\in \mathcal T$;
\item if $U,V\in \mathcal T$ then $U\cap V\in \mathcal T$, and
\item if $\mathcal O\subseteq\mathcal T$ then $\bigcup\mathcal O\in\mathcal T$.
\end{enumerate}
The elements of $\mathcal T$ are called {\em open sets}.
\end{definition}

If $x \in X$, a {\em neighbourhood} of $x$ is an open set $U \subseteq X$ such that $x \in U$. Given a set $A\subseteq X$, its {\em interior}, denoted $A^\circ$, is the largest open set contained in $A$.
It is defined formally by
\begin{equation}\label{EqInterior}
A^\circ=\bigcup\cbra U\in\mathcal T:U\subseteq A\cket.
\end{equation}
Dually, we define the closure $\overline A$ as $X\setminus(X\setminus A)^\circ$; this is the smallest closed set containing $A$.

If $\<X,\mathcal T\>$ is a topological space, a function $S\colon X \to X$ is {\em continuous} if, whenever $U \subseteq X$ is open, it follows that $S^{-1}[U]$ is open.
The function $S$ is {\em open} if, whenever $V \subseteq X$ is open, then so is $S[V]$.
An open, continuous function is an {\em interior map}, and a bijective interior map is a {\em homeomorphism}.

A dynamical system is then a topological space equipped with a continuous function:

\begin{definition}
A {\em dynamical (topological) system} is a triple $\mathcal X = (X,\mathcal T,S)$ such that $(X,\mathcal T)$ is a topological space and $S\colon X \to X$ is continuous. We say that $\mathcal X$ is {\em invertible} if $S$ is a {\em homeomorphism,} i.e., $S^{-1}$ is also a continuous function, and {\em open} if $S$ is an interior map.
\end{definition}

\subsection{Up-set topologies}

Topological spaces generalize posets in the following way. Let $\mathcal F=\<W,{\acc}\>$ be a poset; that is, $W$ is any set and $\acc$ is a transitive, reflexive, antisymmetric relation on $W$.
To see $\mathcal F$ as a topological space, define $\mathord \uparrow w=\cbra v:w\peq v\cket.$
Then consider the topology $\mathcal T_\peq$ on $W$ given by setting $U\subseteq W$ to be open if and only if, whenever $w\in U$, we have $\mathord \uparrow w\subseteq U$. A topology of this form is a {\em up-set topology} \cite{alek}.
The interior operator on such a topological space can be computed by
\begin{equation}\label{EqIntPoset}
A^\circ = \{w \in W : {\uparrow} w\subseteq A\};
\end{equation}
i.e., $w$ lies on the interior of $A$ if whenever $v \seq w$, it follows that $v\in A$.

Throughout this text we will often identify partial orders with their corresponding topologies, and many times do so tacitly.
In particular, a dynamical system generated by a poset is called a \emph{dynamic poset}.
It will be useful to characterize the continuous and open functions on posets:
\begin{lemma}
  Consider a poset $\< W, \mathord\peq \>$ and a function $S\colon W \to W$. Then,
  \begin{enumerate}
  
\item    $S$ is continuous with respect to the up-set topology if and only if, whenever $w \peq w'$, it follows that $S(w) \peq S(w')$, and
  
\item    $S$ is open with respect to the up-set topology if whenever $S(w) \peq v$, there is $w' \in W$ such that $w \peq w'$ and $S(w') = v$.
    
  \end{enumerate}
\end{lemma}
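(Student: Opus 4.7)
The key observation is that by equation~(\ref{EqIntPoset}), the open sets in the up-set topology $\mathcal T_\peq$ are exactly the upward-closed subsets of $W$. So both parts reduce to straightforward manipulations of upward closure. I would structure the proof as two items, treating part~(1) as a biconditional and part~(2) as the stated implication.

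For the forward direction of~(1), I would assume $S$ is continuous and pick any $w \peq w'$. To show $S(w) \peq S(w')$, I would test continuity against the specific open set $U = \basic{}{S(w)}$, which is open because it is upward closed by construction. Since $S(w) \in U$, we have $w \in S^{-1}[U]$; by continuity $S^{-1}[U]$ is open, hence upward closed, so $w' \in S^{-1}[U]$, which unpacks to $S(w) \peq S(w')$. For the converse, I would assume monotonicity, take an arbitrary open $U$, and verify that $S^{-1}[U]$ is upward closed directly: if $w \in S^{-1}[U]$ and $w \peq w'$, monotonicity gives $S(w)\peq S(w')$, and upward closure of $U$ then yields $S(w') \in U$.

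For part~(2), I would assume the back-confluence property and show $S[V]$ is upward closed whenever $V$ is open. Given $u \in S[V]$ with $u \peq v$, pick any $w \in V$ with $S(w) = u$. Applying the hypothesis to $S(w) \peq v$ produces some $w' \seq w$ with $S(w') = v$; since $V$ is upward closed and $w \in V$, also $w' \in V$, so $v = S(w') \in S[V]$ as required.

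The entire argument is a routine unfolding of definitions; there is no real obstacle beyond bookkeeping. The only point worth flagging is the choice of the test open set $\basic{}{S(w)}$ in the forward direction of~(1), which is the cleanest way to avoid quantifying over all opens. I would present the proof compactly, leaning on~(\ref{EqIntPoset}) rather than re-deriving the fact that open sets of $\mathcal T_\peq$ are the upward-closed sets.
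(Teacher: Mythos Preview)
Your argument is correct and cleanly organized. The paper itself does not give a proof of this lemma; it states the result and immediately remarks that these are standard confluence properties, deferring to the literature. Your write-up supplies exactly the routine verification the paper omits, and the choice of the principal up-set $\basic{}{S(w)}$ as the test open set in the forward direction of~(1) is the natural one.
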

These are confluence properties common in multi-modal logics; 
note that in \cite{BoudouCSL} we referred to maps satisfying the two conditions as {\em persistent maps.}

\begin{figure}

\begin{center}

\begin{tikzpicture}[scale=.7]

\def\y{1.3}

\draw (1.5,\y) node {Continuity};

\draw[thick] (0,0) circle (.35);

\draw (.04,0+.03) node {$w'$};

\draw[very thick,->] (.5,0) -- (2.5,0);

\draw (1.5,.5) node {$S$};

\draw[thick] (3,0) circle (.35);


\draw[thick] (0,-3) circle (.35);

\draw (0,-3) node {$w$};

\draw[very thick,-> ] (.5,-3) -- (2.5,-3);

\draw (1.5,-3.5) node {$S$};

\draw[very thick,->] (0,-2.5) -- (0,-.5);

\draw (-.5,-1.5) node {{\large$\rotatebox[origin=c]{90}{$\peq$}$}};

\draw[thick] (3,-3) circle (.35);


\draw[very thick,->,dashed] (3,-2.5) -- (3,-.5);

\draw (3.5,-1.5) node {{\large$\rotatebox[origin=c]{90}{$\peq$}$}};


\def\x{6}

\draw (1.5+\x,\y) node {Openness};

\draw[thick,dashed] (0+\x,0) circle (.35);

\draw (0+\x,0+.03) node {$w'$};

\draw[very thick,->,dashed] (.5+\x,0) -- (2.5+\x,0);

\draw (1.5+\x,.5) node {$S$};

\draw[thick] (3+\x,0) circle (.35);

\draw (3+\x,0) node {$v$};

\draw[thick] (0+\x,-3) circle (.35);

\draw (0+\x,-3) node {$w$};

\draw[very thick,->] (.5+\x,-3) -- (2.5+\x,-3);

\draw (1.5+\x,-3.5) node {$S$};

\draw[very thick,->,dashed] (0+\x,-2.5) -- (0+\x,-.5);

\draw (-.5+\x,-1.5) node {{\large$\rotatebox[origin=c]{90}{$\peq$}$}};

\draw[thick] (3+\x,-3) circle (.35);


\draw[very thick,->] (3+\x,-2.5) -- (3+\x,-.5);

\draw (3.5+\x,-1.5) node {{\large$\rotatebox[origin=c]{90}{$\peq$}$}};

\end{tikzpicture}

\end{center}

\caption{On a dynamic poset the above diagrams can always be completed if $S$ is continuous or open, respectively. Open, continuous maps on a poset are {\em persistent.}}\label{FigCO}
\end{figure}
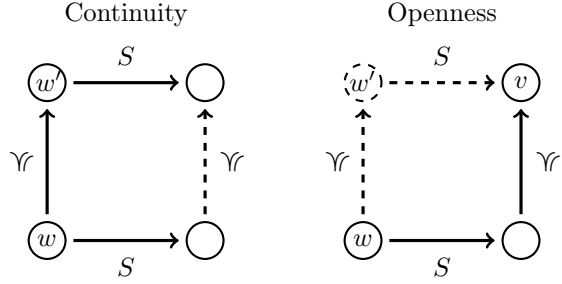


\section{Semantics}\label{SecSemantics}

In this section we will see how dynamical systems can be used to provide a natural intuitionistic semantics for the language of linear temporal logic.

\subsection{Basic definitions}

Formulas are interpreted as open subspaces of a dynamical system. Each propositional variable $p$ is assigned an open set $\val p$, and then $\val\cdot$ is defined recursively for more complex formulas according to the following:

\begin{definition}\label{DefSem}
Given a dynamical system $\mathcal X=(X,\mathcal T ,S)$, a {\em valuation on $\mathcal X$} is a function $\lb\cdot\rb\colon\mathcal L\to \mathcal T$ such that:
\begin{align*}
\lb\bot\rb &=\varnothing \\
\lb\varphi\wedge\psi\rb &=\lb\varphi\rb\cap \lb\psi\rb \\
\lb\varphi\vee\psi\rb &=\lb\varphi\rb\cup \lb\psi\rb\\
\lb\varphi\to\psi\rb &= \big ( (X\setminus\lb\varphi\rb)\cup \lb\psi\rb\big )^\circ\\
\val{\tnext\varphi}&=S^{-1} \val\varphi \\
\val{\diam\varphi}&=\textstyle\bigcup_{n\geq 0}S^{-n} \val\varphi \\
\val{\ubox\varphi} &= \bigcup \ \Big \{U\in \mathcal T : S[U] \subseteq U\subseteq \val\varphi \Big \}
\end{align*}
A tuple $\mathcal M = (X, \mathcal T, S, \val\cdot)$ consisting of a dynamical system with a valuation is a {\em dynamic topological model,} and if $\mathcal T$ is generated by a partial order, we will say that $\mathcal M$ is a {\em dynamic poset model.}
\end{definition}

All of the semantic clauses are standard from either intuitionistic or temporal logic, with the exception of that for $\ubox\varphi$, which we discuss in greater detail below. It is not hard to check by structural induction on $\varphi$ that $\val \varphi$ is uniquely defined given any assignment of the propositional variables to open sets, and that $\val \varphi$ is always open.

In practice, it is convenient to have a `pointwise' characterization of Definition \ref{DefSem}. For a model $\mathcal M = (X, \mathcal T, S, \val\cdot)$, $x \in X$ and $\varphi \in \mathcal L$, we write $\mathcal M,x \models \varphi$ if $x\in \val \varphi$, and $\mathcal M \models \varphi $ if $\val \varphi = X$.
Then, in view of \eqref{EqInterior}, given formulas $\varphi$ and $\psi$, $\mathcal M, x \models {\varphi \to \psi}$ if and only if there is a neighbourhood $U$ of $x$ such that for all $y \in U$, if $\mathcal M, y \models \varphi$ then $\mathcal M, y \ \models \psi$; note that this is a special case of {\em neighbourhood semantics} \cite{PacuitNeighborhood}.

Using \eqref{EqIntPoset}, this can be simplified somewhat in the case that $\mathcal T$ is generated by a partial order $\peq$:

\begin{proposition}
If $(X,{\peq},S,\val\cdot)$ is a dynamic poset model, $x\in X$, and $\varphi$, $\psi$ are formulas, then $\mathcal M, x \models {\varphi \to \psi}$ if and only if whenever $y \seq x$ and $\mathcal M, y \models \varphi$, it follows that $\mathcal M, y \models \psi$.
\end{proposition}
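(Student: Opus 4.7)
The plan is to unfold the topological semantics for implication given in Definition~\ref{DefSem} and then apply the poset-specific formula for the interior operator from equation~\eqref{EqIntPoset}, which is available because $\mathcal T$ is the up-set topology generated by $\peq$.

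First I would spell out that, by definition, $\mathcal M, x \models \varphi \to \psi$ means $x \in \val{\varphi \to \psi} = \bigl((X \setminus \val\varphi) \cup \val\psi\bigr)^\circ$. Then I would invoke \eqref{EqIntPoset} to rewrite this membership as ${\uparrow}x \subseteq (X \setminus \val\varphi) \cup \val\psi$, i.e., for every $y \seq x$ we have $y \notin \val\varphi$ or $y \in \val\psi$. This is precisely the condition that whenever $y \seq x$ and $\mathcal M, y \models \varphi$, then $\mathcal M, y \models \psi$, giving both directions simultaneously.

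The only thing to double-check is that the use of \eqref{EqIntPoset} is legitimate, which amounts to noting that the set $(X \setminus \val\varphi) \cup \val\psi$ need not itself be open, but the interior formula \eqref{EqIntPoset} applies to an arbitrary subset $A$ of a poset with the up-set topology. Since no step requires more than unwinding definitions, there is essentially no obstacle; the proof is a one-line chain of equivalences once the interior is rewritten in poset form, and could even be omitted as routine. I would therefore present it as a brief computation rather than a structured argument.
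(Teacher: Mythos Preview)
Your proposal is correct and follows exactly the approach the paper signals: the paper does not give an explicit proof but introduces the proposition with the phrase ``Using \eqref{EqIntPoset}, this can be simplified,'' which is precisely the unfolding of Definition~\ref{DefSem} followed by the poset interior formula that you describe. Your observation that \eqref{EqIntPoset} applies to arbitrary subsets, not just open ones, is the only point that needed checking, and you have it right.
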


This is the standard relational interpretation of implication, and thus topological semantics are a generalization of the usual Kripke semantics.

\subsection{The topological `henceforth'}

Now let us discuss the topological interpretation of `henceforth', which is the main novelty in our semantics. In classical temporal logic, $\lb \ubox\varphi\rb$ is the largest set contained in $\lb\varphi\rb$ which is closed under $S$. In our semantics, $\lb\ubox\varphi\rb$ is the greatest {\em open} set which is closed under $S$. From this perspective, our interpretation is the natural intuitionistic variant of the classical one.
If $\mathcal M, x \models {\ubox \varphi}$, this fact is witnessed by an open, $S$\mbox{-}invariant neighbourhood of $x$, where $U\subseteq X$ is {\em $S$-invariant} if $S[U] \subseteq U$.

\begin{proposition}
If $(X,{\mathcal T},S,\val\cdot)$ is a dynamic topological model, $x\in X$, and $\varphi$ is any formula, then $\mathcal M, x \models {\ubox \varphi}$ if and only if there is an $S$-invariant neighbourhood $U$ of $x$ such that for all $y \in U$, $\mathcal M, y \models \varphi$.
\end{proposition}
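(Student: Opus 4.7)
The proposition is essentially an unfolding of the clause for $\ubox$ in Definition~\ref{DefSem}, translated from the ``global'' union-of-opens form into a ``pointwise'' existential statement. So the plan is just to carefully transcribe the definition in both directions, using the fact that ``$\mathcal M, y \models \varphi$ for every $y \in U$'' is the pointwise rephrasing of $U \subseteq \val\varphi$.

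For the forward direction, I would assume $\mathcal M, x \models \ubox \varphi$, which by definition means $x \in \val{\ubox\varphi} = \bigcup \{ U \in \mathcal T : S[U] \subseteq U \subseteq \val \varphi \}$. Since $x$ lies in this union, there is a witness $U \in \mathcal T$ with $x \in U$, $S[U] \subseteq U$, and $U \subseteq \val \varphi$. Such a $U$ is an open set containing $x$, hence a neighbourhood of $x$; it is $S$-invariant by the second condition; and the inclusion $U \subseteq \val \varphi$ is exactly the statement that $\mathcal M, y \models \varphi$ for every $y \in U$.

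For the backward direction, I would assume there is an $S$-invariant neighbourhood $U$ of $x$ such that $\mathcal M, y \models \varphi$ for every $y \in U$. Unpacking, $U$ is open, $x \in U$, $S[U] \subseteq U$, and $U \subseteq \val \varphi$. Hence $U$ belongs to the collection $\{V \in \mathcal T : S[V] \subseteq V \subseteq \val \varphi\}$, so $x \in U \subseteq \val{\ubox\varphi}$, i.e., $\mathcal M, x \models \ubox \varphi$.

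There is no genuine obstacle here; the only thing worth pausing over is the conceptual identification of the two data ``$U \subseteq \val\varphi$'' and ``$\forall y \in U,\ \mathcal M, y \models \varphi$,'' which is immediate from the convention $\mathcal M, y \models \varphi \iff y \in \val \varphi$. The argument does not require any hypothesis on $\mathcal T$ (such as being generated by a poset), so the pointwise characterisation applies to arbitrary dynamic topological models.
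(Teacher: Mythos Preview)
Your proof is correct and is exactly the intended argument: the paper states this proposition without proof, treating it as an immediate unfolding of the semantic clause for $\ubox$ in Definition~\ref{DefSem}, which is precisely what you do.
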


In fact, the open, $S$-invariant sets form a topology; that is, the family of $S$-invariant open sets is closed under finite intersections and arbitrary unions. This topology is coarser than $\mathcal T$, in the sense that every $S$-invariant open set is (tautologically) open. Thus $\ubox$ can itself be seen as an interior operator based on a coarsening of $\mathcal T$, and $\val{\ubox\varphi}$ is always an $S$-invariant open set.

\begin{example}\label{ExBoxOnR}
As usual, the real number line is denoted by $\mathbb R$ and we assume that it is equipped with the standard topology, where $U \subseteq \mathbb R$ is open if and only if it is a union of intervals of the form $(a,b)$.
Consider a dynamical system based on $\mathbb R$ with $S \colon \mathbb R \to \mathbb R$ given by $S(x) = 2x$.
We claim that for any model $\mathcal M$ based on $(\mathbb R, S)$ and any formula $\varphi$, $\mathcal M, 0 \models {\ubox \varphi}$ if and only if $ \mathcal M \models \varphi$.

To see this, note that one implication is obvious since $\mathbb R$ is open and $S$-invariant, so if $\val\varphi = \mathbb R$ it follows that $ \mathcal M, 0 \models {\ubox\varphi}$.
For the other implication, assume that $\mathcal M, 0 \models {\ubox\varphi}$, so that there is an $S$-invariant, open $U\subseteq \val\varphi$ with $0 \in U$.
It follows from $U$ being open that for some $\varepsilon > 0$, $(-\varepsilon,\varepsilon) \subseteq U$.
Now, let $x \in \mathbb R$, and let $n$ be large enough so that $|2^{-n} x| < \varepsilon$.
Then, $2^{-n} x \in U$, and since $U$ is $S$-invariant, $x = S^n (2^{-n} x ) \in U$.
Since $x$ was arbitrary, $U = \mathbb R$, and it follows that $\mathcal M \models \varphi$.

On the other hand, suppose that $ 0 < a <x$ and $(a,\infty) \subseteq \val\varphi$.
Then, $(a,\infty)$ is open and $S$-invariant, so it follows that $x \in \val{\ubox\varphi}$.
Hence in this case we do not require that $\val\varphi = \mathbb R$. Similarly, if $x<a<0$ and $(-\infty,a) \subseteq \val \varphi$, we readily obtain $x \in \val{\ubox\varphi}$.
\end{example}

\subsection{The relational `henceforth'}

As was the case for implication, our interpretation for $\ubox$ becomes familiar when restricted to Kripke semantics.

\begin{lemma}\label{LemBoxKripke}
Let $\mathcal M = (W,{\peq},S,\val \cdot)$ be any dynamic poset model, $w\in W$ and $\varphi \in \mathcal L$. Then, the following are equivalent:
\begin{enumerate}[label=(\alph*)]

\item\label{ItOfficial} $\mathcal M, w \models \ubox \varphi$;

\item\label{ItKremer} $w \in \left ( \bigcap _{n<\omega} S^{-n} \val \varphi \right )^\circ$;

\item\label{ItKripke} for all $n<\omega$, $\mathcal M, S^n (w)  \models \varphi$.

\end{enumerate}
\end{lemma}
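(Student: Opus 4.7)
My plan is to prove the chain of implications (a) $\Rightarrow$ (b) $\Rightarrow$ (c) $\Rightarrow$ (a), with the third implication being where the poset assumption is essential.

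For (a) $\Rightarrow$ (b), I would unpack the definition of $\val{\ubox\varphi}$ and note that any $S$-invariant open $U\subseteq\val\varphi$ automatically satisfies $U\subseteq S^{-n}\val\varphi$ for every $n$, by an easy induction on $n$ using $S[U]\subseteq U$. Taking the union of all such $U$ gives $\val{\ubox\varphi}\subseteq\bigcap_{n<\omega}S^{-n}\val\varphi$, and since $\val{\ubox\varphi}$ is open, it is contained in the interior of this intersection, which is (b). The step (b) $\Rightarrow$ (c) is immediate: the interior of a set is contained in the set, so if $w$ lies in the interior of $\bigcap_n S^{-n}\val\varphi$, then in particular $w\in S^{-n}\val\varphi$ for every $n$, which is exactly $S^n(w)\in\val\varphi$.

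The main work is (c) $\Rightarrow$ (a), and this is where I expect the only real subtlety. Given that $S^n(w)\in\val\varphi$ for all $n$, I would define
\[
U\;=\;\bigcup_{n<\omega}{\uparrow} S^n(w).
\]
Each ${\uparrow} S^n(w)$ is open in the up-set topology, so $U$ is open, and $w\in U$ by taking $n=0$. To see that $U\subseteq\val\varphi$, note that $\val\varphi$ is open, hence upward closed, so from $S^n(w)\in\val\varphi$ we obtain ${\uparrow} S^n(w)\subseteq\val\varphi$. Finally, to verify $S[U]\subseteq U$, pick $v\in U$; then $S^n(w)\peq v$ for some $n$, and continuity of $S$ on the poset (the first clause of the preceding lemma) yields $S^{n+1}(w)\peq S(v)$, so $S(v)\in{\uparrow} S^{n+1}(w)\subseteq U$. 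Thus $U$ witnesses $w\in\val{\ubox\varphi}$.

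The main obstacle, such as it is, lies in (c) $\Rightarrow$ (a): on a general topological space one cannot build an $S$-invariant open neighbourhood out of the orbit $\{S^n(w):n<\omega\}$, which is precisely why the topological and relational interpretations of $\ubox$ diverge in general. The up-set topology rescues the construction because the minimal open neighbourhoods ${\uparrow} x$ are available and continuity of $S$ coincides with monotonicity, making the orbit-union automatically $S$-invariant.
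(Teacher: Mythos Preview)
Your proposal is correct and follows the same chain of implications (a) $\Rightarrow$ (b) $\Rightarrow$ (c) $\Rightarrow$ (a) as the paper, with the identical construction $U=\bigcup_{n<\omega}{\uparrow}S^n(w)$ for the crucial step (c) $\Rightarrow$ (a). The only minor difference is in (a) $\Rightarrow$ (b): the paper argues pointwise using the poset characterization of the interior (showing that every $v\seq w$ lies in $\bigcap_n S^{-n}\val\varphi$ via monotonicity of $S$), whereas your argument---observing that any $S$-invariant open $U\subseteq\val\varphi$ satisfies $U\subseteq\bigcap_n S^{-n}\val\varphi$, hence $\val{\ubox\varphi}$ is an open subset of that intersection---is purely topological and therefore valid for arbitrary dynamical systems, not just posets. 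This is a small gain in generality for that implication, but the overall proof is essentially the same.
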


\proof
First we prove that \ref{ItOfficial} implies \ref{ItKremer}. Assume that $\mathcal M, w \models \ubox \varphi$, so that there is an $S$-invariant neighbourhood $U$ of $w$ with $U \subseteq \val \varphi$. To see that $w \in \left ( \bigcap _{n<\omega} S^{-n} \val \varphi \right )^\circ$, we must show that if $v \seq w$, then $v \in \bigcap _{n<\omega} S^{-n} \val \varphi$. So fix such a $v$ and $n<\omega$. Since $U$ is $S$-invariant, $S^n(w) \in U$, and since $S^n(v) \seq S^n(w)$ and $U$ is open, $S^n(v) \in U$, as needed. Thus $v \in \bigcap _{n<\omega} S^{-n} \val \varphi$, and since $v \seq w$ was arbitrary, \ref{ItKremer} holds.

That \ref{ItKremer} implies \ref{ItKripke} is immediate from
\[\left ( \bigcap _{n<\omega} S^{-n} \val \varphi \right )^\circ \subseteq \bigcap _{n<\omega} S^{-n} \val \varphi ,\]
so it remains to show that \ref{ItKripke} implies \ref{ItOfficial}.
Suppose that for all $n<\omega$, $\mathcal M, S^n (w)  \models \varphi$, and let $U = \bigcup_{n < \omega} {\uparrow} S^n(w)$. That the set $U$ is open follows from each ${\uparrow} S^n(w)$ being open and unions of opens being open.
If $v \in U$, then $v \seq S^n(w)$ for some $n<\omega$ and hence by upwards persistence, from $\mathcal M, S^n(w) \models \varphi$ we obtain $\mathcal M, v\models \varphi$; moreover, $S(v) \seq S^{n+1}(w)$ so $S(v) \in U$. Since $v \in U$ was arbitrary, we conclude that $U$ is $S$-invariant and $U\subseteq \val \varphi$.
Thus $U$ witnesses that $\mathcal M, w\models \ubox \varphi$.
\endproof

\begin{remark}
In fact, Kremer \cite{KremerIntuitionistic} uses \ref{ItKremer} as the definition of $\val {\ubox\varphi}$.
However, as we mentioned in the introduction, even our minimal axiomatic system $\logbasic$ is not sound for such an interpretation over arbitrary dynamical systems.
\end{remark}

\section{Soundness}\label{SecSound}

Recall that if $\mathcal M = (X,\mathcal T,S,\val\cdot)$ is any dynamic topological model and $\varphi \in \mathcal L$ is any formula, we write $\mathcal M \models \varphi$ if $\val\varphi =X$. Similarly, if $\mathcal X= (X,\mathcal T,S)$ is a dynamical system, we write $\mathcal X \models \varphi$ if for any valuation $\val \cdot$ on $\mathcal X$, we have that $(\mathcal X,\val\cdot) \models \varphi$. Finally, if $\Omega$ is a class of structures, we write $\Omega \models \varphi$ if for every $\mathcal A \in \Omega$, $\mathcal A \models \varphi$, in which case we say that $\varphi$ is {\em valid} on $\Omega$.

For purposes of this discussion, a {\em logic} may be any set $\Lambda \subseteq \mathcal L$, and we may write $\Lambda \vdash \varphi$ instead of $\varphi \in \Lambda$. Then, $\Lambda$ is {\em sound} for a class of structures $\Omega$ if, whenever $\Lambda \vdash \varphi$, it follows that $\Omega \models \varphi$.

In this section we will show that the four logics we have considered are sound for semantics based on different classes of dynamic topological systems (including dynamic preorders). 
For this, the following simple observation will be useful.

  \begin{lemma}\label{LemImpCrit} If $\mathcal M = (X, \mathcal T, S, \val\cdot)$ is any model and $\varphi,\psi \in \mathcal L$, then $\mathcal M \models \varphi \to \psi$ if and only if $\lb \varphi\rb \subseteq \lb \psi \rb$.   
  \end{lemma}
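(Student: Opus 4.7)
The proof is a short unwinding of the semantic clause for implication, so my plan is to just push through the definitions and use one elementary fact about the interior operator.

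First, I would unfold the definition to rewrite $\mathcal M \models \varphi \to \psi$ as $\val{\varphi \to \psi} = X$, which by Definition \ref{DefSem} amounts to
\[
\bigl((X \setminus \val\varphi) \cup \val\psi\bigr)^\circ = X.
\]
Next, I would observe the elementary fact that for any set $A \subseteq X$, $A^\circ = X$ if and only if $A = X$: one direction is immediate from $A^\circ \subseteq A$, and the other holds because $X$ itself is open. Applying this to $A = (X \setminus \val\varphi) \cup \val\psi$ reduces the equation above to $(X \setminus \val\varphi) \cup \val\psi = X$, which by basic set theory is equivalent to $\val\varphi \subseteq \val\psi$.

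There is no real obstacle here; the only thing worth noting carefully is that the reduction from $A^\circ = X$ to $A = X$ uses only that $X \in \mathcal T$ (which is part of the definition of a topological space), so the argument applies uniformly to all dynamic topological models, not just dynamic poset models. I would present the proof as a short equivalence chain with a one-line justification for the interior step.
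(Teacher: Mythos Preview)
Your proposal is correct and follows essentially the same approach as the paper: both arguments unfold the semantic clause for implication and reduce the question to whether $(X\setminus\val\varphi)\cup\val\psi$ equals $X$, using that $A^\circ = X$ iff $A = X$. The paper phrases the two directions separately (and the converse via a witness $z\in\val\varphi\setminus\val\psi$), whereas you package it as a single equivalence chain, but the content is the same.
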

  
      \proof If $\lb \varphi\rb \subseteq \lb \psi \rb$ then $(X\setminus\lb\varphi\rb)\cup \lb\psi\rb =  X$, so
      $\lb\varphi\to\psi\rb =\big ( (X\setminus\lb\varphi\rb)\cup \lb\psi\rb\big )^\circ = X^\circ = X .$
    Otherwise, there is  $z \in \lb \varphi\rb $ such that $z \notin \lb \psi\rb$, so that $ z \notin \big ( (X\setminus\lb\varphi\rb)\cup \lb\psi\rb\big )^\circ$, i.e.~$z \notin  \lb\varphi \rightarrow \psi \rb $.
    \endproof

\subsection{Soundness of $\logbasic$}
    
    With this in mind, let us now show that our minimal logic is sound for the class of {\em all} dynamical systems.
   
\begin{theorem}\label{ThmSoundZero}
  $\logbasic$ is sound for the class of dynamical systems.
  \end{theorem}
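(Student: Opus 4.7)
The plan is to verify, for each axiom and inference rule of $\logbasic$, that validity is preserved in an arbitrary dynamic topological model $\mathcal M = (X,\mathcal T,S,\val\cdot)$. For the axioms, which all have the shape of an implication, I would systematically apply Lemma~\ref{LemImpCrit} to reduce $\mathcal M \models \varphi \to \psi$ to the set inclusion $\val\varphi \subseteq \val\psi$, thereby turning every verification into a computation in the Heyting algebra of open sets of $\mathcal T$. The intuitionistic tautologies \ref{ax01Taut} and modus ponens \ref{ax13MP} are handled uniformly by the observation that $\val\cdot$ takes values in this Heyting algebra, so the semantic clauses for $\wedge,\vee,\to,\bot$ are exactly the Heyting operations.

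For the $\tnext$-axioms \ref{ax02Bot}--\ref{ax05KNext} and necessitation \ref{ax14NecCirc}, I would use that $S^{-1}$ is a Boolean homomorphism ($S^{-1}\varnothing=\varnothing$, $S^{-1}(A\cap B)=S^{-1}A\cap S^{-1}B$, $S^{-1}(A\cup B)=S^{-1}A\cup S^{-1}B$) and, crucially, that continuity of $S$ sends opens to opens; this last point is what makes \ref{ax05KNext} go through, since from $\val{\varphi\to\psi}\subseteq(X\setminus\val\varphi)\cup\val\psi$ one takes preimages and notes that $S^{-1}\val{\varphi\to\psi}$ is already open, hence lies in the interior computing $\val{\tnext\varphi\to\tnext\psi}$. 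The $\diam$-axioms \ref{ax08DiamVee} and \ref{ax10DiamFix} fall out of the fact that $\val{\diam\varphi}=\bigcup_{n\geq 0}S^{-n}\val\varphi$ distributes over unions and contains both $\val\varphi$ and $S^{-1}\val{\diam\varphi}$; the induction rule \ref{ax12:ind:2} is a routine induction on $n$ showing $S^{-n}\val\varphi\subseteq\val\varphi$ whenever $S^{-1}\val\varphi\subseteq\val\varphi$.

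The delicate cases are the axioms involving $\ubox$, and here I would exploit uniformly the greatest-fixed-point characterization: $\val{\ubox\varphi}$ is the union of all open, $S$\mbox{-}invariant $U\subseteq\val\varphi$. Axiom \ref{ax09BoxFix} is then immediate because any such witnessing $U$ satisfies $U\subseteq\val\varphi$ and $U\subseteq S^{-1}U\subseteq S^{-1}\val{\ubox\varphi}$. Rule \ref{ax11:ind:1} is equally immediate: if $\val\varphi\subseteq S^{-1}\val\varphi$ then $\val\varphi$ itself is an open $S$-invariant subset of itself, hence $\val\varphi\subseteq\val{\ubox\varphi}$.

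The main obstacle—and the place where I expect to spend most of the care—are the mixed axioms \ref{ax06KBox} and \ref{ax07:K:Dual}, which require combining two different $S$-invariant opens. For \ref{ax06KBox}, given $x\in\val{\ubox(\varphi\to\psi)}$ with witness $U$, and $y\in U$ with $y\in\val{\ubox\varphi}$ witnessed by $V$, the candidate witness for $y\in\val{\ubox\psi}$ is $U\cap V$: it is open, $S$-invariant, contains $y$, and every $z\in U\cap V$ satisfies $z\in\val\varphi\cap\val{\varphi\to\psi}\subseteq\val\psi$ (using that $\val{\varphi\to\psi}\subseteq(X\setminus\val\varphi)\cup\val\psi$). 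Axiom \ref{ax07:K:Dual} is handled by a single-witness variant of the same idea: if $x\in\val{\ubox(\varphi\to\psi)}$ with witness $U$ and $y\in U\cap\val{\diam\varphi}$, pick $n$ with $S^n(y)\in\val\varphi$; by $S$-invariance $S^n(y)\in U\subseteq\val{\varphi\to\psi}$, hence $S^n(y)\in\val\psi$ and $y\in\val{\diam\psi}$, which shows $U\subseteq(X\setminus\val{\diam\varphi})\cup\val{\diam\psi}$ and, being open, witnesses $x\in\val{\diam\varphi\to\diam\psi}$. Once these two cases are settled, the theorem assembles by a straightforward induction on the length of a derivation in $\logbasic$.
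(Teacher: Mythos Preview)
Your proposal is correct and follows essentially the same route as the paper: verify each axiom and rule directly, reducing implications to set inclusions via Lemma~\ref{LemImpCrit}, using the Boolean-homomorphism property of $S^{-1}$ and continuity for the $\tnext$-axioms, and the greatest-fixed-point description of $\val{\ubox\varphi}$ for the $\ubox$-axioms. The only stylistic difference is in \ref{ax06KBox}: the paper observes directly that $\val{\ubox(\varphi\to\psi)}\cap\val{\ubox\varphi}$ is itself an open $S$-invariant subset of $\val\psi$ and hence of $\val{\ubox\psi}$, whereas you localize to a point $y$ and intersect individual witnesses $U\cap V$; both arguments are the same idea, and yours is in fact a version the authors also considered.
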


  \proof
Let $\mathcal M = (X , \mathcal T, S, \val \cdot)$ be any dynamical topological model; we must check that all the axioms \ref{ax01Taut}-\ref{ax10DiamFix} are valid on $\mathcal M$ and all rules \ref{ax11:ind:1}-\ref{ax14NecCirc} preserve validity. Note that all intuitionistic tautologies are valid due to the soundness for topological semantics \cite{MintsInt}. Many of the other axioms can be checked routinely, so we focus only on those axioms involving the continuity of $S$ or the semantics for $\ubox$.
\medskip

\ignore{
\noindent \ref{ax02Bot}-\ref{ax04NexVee} These axioms follow from the fact that $S^{-1}[\cdot]$ is a Boolean homomorphism; for example, for \ref{ax03NexWedge},
\begin{align*}
&\lb \tnext \left( \varphi \wedge \psi \right) \rb = S^{-1}\val{\varphi \wedge \psi} = S^{-1} \big [\val{\varphi} \cap \val{\psi} \big ]\\
& = S^{-1} \val{\psi} \cap  S^{-1} \val{\psi} = \val { \tnext \varphi } \cap \val{ \tnext \psi } = \lb \tnext \varphi \wedge\tnext \psi \rb,
\end{align*}
so $\mathcal M \models \tnext (\varphi \wedge \psi) \leftrightarrow \tnext \varphi \wedge \tnext \psi$ in view of Lemma \ref{LemImpCrit}. Henceforth, we will use the latter lemma without mention.
\medskip
}

\noindent \ref{ax05KNext} Suppose that $x\in \val{\tnext(\varphi \to \psi)}$. Then, $S(x) \in \val {\varphi \to \psi}$. Since $S$ is continuous and $\val {\varphi \to \psi}$ is open, $U = S^{-1} \val{\varphi \to \psi}$ is a neighbourhood of $x$. Then, for $y \in U$, if $y \in \val{\tnext \varphi}$, it follows that $S(y) \in \val \varphi \cap \val {\varphi \to \psi}$, so that $S(y) \in \val \psi$ and $y \in \val {\tnext \psi}$. Since $y \in U$ was arbitrary, $x \in \val{\tnext\varphi \to \tnext \psi}$, thus $\val{\tnext(\varphi \to \psi)} \subseteq \val {\tnext \varphi \to \tnext \psi}$, and by Lemma \ref{LemImpCrit} (which from now on we will use without mention), \ref{ax05KNext} is valid on $\mathcal M$.
\medskip

\ignore{
\noindent \ref{ax06KBox} Suppose that $x\in \val{\ubox(\varphi \to \psi)}$. Then, there is an $S$-invariant neighbourhood $U$ of $x$ such that $U \subseteq \val{\varphi \to \psi}$. We claim that if $y \in U \cap \val{\ubox\varphi}$ it follows that $y \in \val{\ubox\psi}$, from which we obtain $x\in \val {\ubox\varphi \to \ubox \psi}$, as needed. If $y \in U \cap \val{\ubox\varphi}$, let $U'$ be an $S$-invariant neighbourhood of $y$ such that $U' \subseteq \val {\varphi}$, and define $V = U \cap U'$. Then, the set $V$ is an $S$-invariant neighbourhood of $y$. Moreover, if $z\in V$, then $z \in U \subseteq \val{\varphi \to \psi}$, while $z\in U' \subseteq \val \varphi$, hence $z\in \val \psi$. It follows that $V\subseteq \val \psi$, and thus $y \in \val {\ubox \psi}$, as desired.
\medskip
}

\noindent \ref{ax06KBox} Observe that $\val{\ubox(\varphi \to \psi)}$ is an $S$-invariant open subset of $\val{\varphi \to \psi}$.
Similarly, $\val{\ubox\varphi}$ is an $S$-invariant open subset of $\val{\varphi}$.
Let
\[U = \val{\ubox(\varphi \to \psi)} \cap \val{\ubox\varphi}.\]
Since $U$ is open, it suffices to prove that $U \subseteq \val{\ubox\psi}$.
Moreover, $U$ is $S$-invariant, therefore it suffices to prove that $U \subseteq \val{\psi}$,
which is direct because $U \subseteq \val{\varphi \to \psi} \cap \val{\varphi}$ and $\val{\varphi \to \psi} \subseteq (X \setminus \val{\varphi}) \cup \val{\psi}$.
\medskip

\ignore{
\noindent \ref{ax07:K:Dual} As before, suppose that $x\in \val{\ubox(\varphi \to \psi)}$, and let $U$ be an $S$-invariant neighbourhood of $x$ such that $U \subseteq \val{\varphi \to \psi}$. If $y \in U \cap \val{\diam \varphi}$, then $S^n(y)\in \val \varphi$ for some $n$; since $U$ is $S$-invariant, $S^n(y) \in U$, hence $S^n(y) \in \val {\psi}$ and $y \in \val{\diam \psi}$. We conclude that $x \in \val {\diam \varphi \to \diam \psi}$.
\medskip

\noindent \ref{ax08DiamVee} This axiom is standard from relational semantics. If $x \in \val {\diam (\varphi \vee \psi)}$, then $S^n(x) \in \val {\varphi \vee \psi}$ for some $n\geq 0$; it follows that either $S^n(x) \in \val \varphi$ or $S^n(x) \in \val \psi$, which in either case yields $x \in \val {\diam \varphi \vee \diam \psi}$.
\medskip
}

\noindent \ref{ax09BoxFix} Suppose that $x\in \val {\ubox\varphi}$, and let $U\subseteq \val \varphi$ be an $S$-invariant neighbourhood of $x$. Then, $x \in U$, so $x \in \val \varphi$. Moreover, $U$ is also an $S$-invariant neighbourhood of $S(x)$, so $S(x) \in \val {\ubox \varphi}$ and thus $x \in \val{\tnext \ubox \varphi}$. We conclude that $x \in \val{\varphi \wedge \tnext\ubox \varphi}$.
\medskip

\ignore{
\noindent \ref{ax10DiamFix} This axiom is also standard from relational semantics. If $x\in \val {\varphi \vee \tnext \diam \varphi}$, then either $x \in \val \varphi$ and hence $S^0(x) \in \val \varphi$, yielding $x\in \val {\diam \varphi}$, or $S(x) \in \val{\diam \varphi}$, yielding $S^{n+1}(x) = S^n(S(x)) \in \val \varphi$ for some $n$, hence $x\in \val {\diam \varphi}$.
\medskip
}

\noindent \ref{ax11:ind:1} If $\varphi \to \tnext \varphi$ is valid and $x\in \val \varphi$, then $\val \varphi$ is open (by the intuitionistic semantics) and $S$-invariant, since if $y \in \val \varphi$, from $y \in \val{\varphi \to \tnext\varphi}$ we obtain $S(y) \in \val \varphi$. It follows that $\val\varphi$ is an $S$-invariant neighbourhood of $x$, so $x\in \val {\ubox \varphi}$.
\ignore{
\medskip

\noindent \ref{ax12:ind:2} This rule is standard from relational semantics; suppose that $\tnext \varphi \to \varphi$ is valid, and let $x\in \val{ \diam \varphi}$. Let $n$ be least so that $S^n(x) \in \val \varphi$; if $n > 0$, from $S^{n-1}(x) \in \val {\tnext\varphi \to \varphi}$ we obtain $S^{n-1}(x) \in \val \varphi$, contradicting the minimality of $n$. Thus $n = 0$ and $x \in \val \varphi$.
\medskip
}
\ignore{
\noindent \ref{ax13MP}-\ref{ax14NecCirc} Follow from standard arguments.}
\endproof

\subsection{Soundness of stronger logics}

The additional axioms we have considered are valid over specific classes of dynamical systems. Specifically, the constant domain axiom is valid for the class of dynamic posets, while the Fischer Servi axioms are valid for the class of open systems. Let us begin by discussing the former in more detail.

\begin{theorem}\label{ThmSoundCD}
$\logexp$ and $\logexp_\ubox$ are sound for the class of dynamic posets.
\end{theorem}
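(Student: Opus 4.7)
The plan is to observe that dynamic posets are a subclass of dynamical systems, so by Theorem \ref{ThmSoundZero}, $\logbasic$ is already sound over this subclass. Hence the only work is to check that ${\rm CD}(\varphi,\psi)$ is valid on every dynamic poset (for $\logexp$) and that ${\rm BI}(\varphi,\psi)$ is valid on every dynamic poset (for $\logexp_\ubox$). My main tool will be Lemma \ref{LemBoxKripke}, which on a dynamic poset reduces $\mathcal M,w \models \ubox\varphi$ to the pointwise condition $\mathcal M,S^n(w)\models\varphi$ for all $n<\omega$, combined with the fact that $\val{\varphi\vee\psi}=\val\varphi\cup\val\psi$ so that $\mathcal M,y\models \varphi\vee\psi$ iff $\mathcal M,y\models\varphi$ or $\mathcal M,y\models\psi$ at every point $y$.

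For ${\rm CD}$, fix a dynamic poset model $\mathcal M=(W,{\peq},S,\val\cdot)$ and $w\in\val{\ubox(\varphi\vee\psi)}$. By Lemma \ref{LemBoxKripke}, $\mathcal M,S^n(w)\models\varphi\vee\psi$ for every $n<\omega$. I split on cases: if there exists $n$ with $\mathcal M,S^n(w)\models\psi$, then $w\in\val{\diam\psi}$; otherwise the disjunction forces $\mathcal M,S^n(w)\models\varphi$ for every $n$, and another application of Lemma \ref{LemBoxKripke} delivers $w\in\val{\ubox\varphi}$. Either way $w\in\val{\ubox\varphi\vee\diam\psi}$, and Lemma \ref{LemImpCrit} concludes validity on $\mathcal M$.

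For ${\rm BI}$, suppose $w\in\val{\ubox(\varphi\vee\psi)}\cap\val{\ubox(\tnext\psi\to\psi)}$. The same case split applies: if $\mathcal M,S^n(w)\models\varphi$ for every $n$, then $w\in\val{\ubox\varphi}$ and we are done. Otherwise pick the least $n$ with $\mathcal M,S^n(w)\models\psi$. The new ingredient is a backward induction on $n$ using the second conjunct: by Lemma \ref{LemBoxKripke} we have $\mathcal M,S^{n-1}(w)\models\tnext\psi\to\psi$, which by the relational reading of implication (applied at the reflexive point $S^{n-1}(w)$) combined with $\mathcal M,S^{n-1}(w)\models\tnext\psi$ (equivalent to $\mathcal M,S^n(w)\models\psi$) yields $\mathcal M,S^{n-1}(w)\models\psi$, contradicting minimality unless $n=0$. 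Hence $\mathcal M,w\models\psi$, so $w\in\val{\ubox\varphi\vee\psi}$ as required.

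The reason these arguments work on posets but fail on general dynamic topological systems is precisely that Lemma \ref{LemBoxKripke} allows us to reduce $\ubox$ to a pointwise statement along the orbit $\{S^n(w)\}_{n<\omega}$, at which point the classical-style case analysis on disjunction succeeds; on an arbitrary topological space the $S$-invariant open witness for $\ubox$ may be much larger than this orbit and there is no clean way to split it between $\varphi$ and $\psi$. I do not anticipate a serious obstacle: once Lemma \ref{LemBoxKripke} is invoked, both verifications reduce to short case analyses, the only subtlety being that the argument for ${\rm BI}$ must stay inside $\mathcal L_\ubox$ (which it does, since the backward induction uses only the $\tnext\psi\to\psi$ hypothesis rather than the $\diam$-based reasoning from Proposition \ref{PropConstoBI}).
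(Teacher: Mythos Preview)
Your proof is correct, and your treatment of ${\rm CD}$ is essentially identical to the paper's (both invoke Lemma~\ref{LemBoxKripke} to reduce to the orbit and then do a case split). The only difference is in how ${\rm BI}$ is handled: the paper does not give a separate semantic argument for ${\rm BI}$ at all, but instead appeals to Proposition~\ref{PropConstoBI}, which shows $\logbasic \vdash {\rm CD}(p,q) \to {\rm BI}(p,q)$; since $\logbasic$ is sound on dynamic posets and ${\rm CD}$ has just been shown valid there, ${\rm BI}$ follows immediately. Your direct backward-induction argument is a perfectly good alternative and is more self-contained, but your worry that the argument ``must stay inside $\mathcal L_\ubox$'' is unnecessary: validity of ${\rm BI}$ on dynamic posets is a purely semantic fact, and nothing prevents one from establishing it via a detour through formulas (like ${\rm CD}$) that mention $\diam$. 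The paper's shortcut is shorter; your route avoids the dependency on Proposition~\ref{PropConstoBI}.
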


\proof
Let $\mathcal M = (X,{\peq}, S, \val\cdot)$ be a dynamic poset model; in view of Theorem \ref{ThmSoundZero}, it only remains to check that $\rm CD$ and $\rm BI$ are valid on $\mathcal M$. However, by Proposition \ref{PropConstoBI}, $\rm BI$ is a consequence of $\rm CD$, so we only check the latter.
\medskip

\noindent (${\rm CD}(\varphi,\psi)$) \ Suppose that $x\in \val {\ubox (\varphi \vee \psi )}$, but $x \not \in \val {\ubox \varphi }$. Then, in view of Lemma \ref{LemBoxKripke}, for some $n \geq 0$, $S^n (x) \not \in \val{\varphi}$. It follows that $S^n (x) \in \val{\psi}$, so that $x \in \val {\diam \psi}$.
\endproof

Note that the relational semantics are used in an essential way, since Lemma \ref{LemBoxKripke} is not available in the topological setting. Now let's turn our attention to the Fischer Servi axioms.

\begin{theorem}\label{ThmSoundFS}
$\loghomeo$ is sound for the class of open dynamical systems.
\end{theorem}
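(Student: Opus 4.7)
By Theorem~\ref{ThmSoundZero}, every dynamical system (hence every open one) validates the axioms and rules of $\logbasic$, so the plan is simply to verify that on an open dynamical model $\mathcal M=(X,\mathcal T,S,\val\cdot)$, both Fischer Servi axioms ${\rm FS}_\tnext$ and ${\rm FS}_\diam$ are valid. In both arguments, the point where openness of $S$ enters is pushing an open neighbourhood $U$ of a point $x$ forward: $S[U]$ is open, so it can serve as a witnessing neighbourhood of $S(x)$.

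For ${\rm FS}_\tnext(\varphi,\psi)$, I would take $x\in\val{\tnext\varphi\to\tnext\psi}$ and let $U\ni x$ be an open neighbourhood with $U\subseteq(X\setminus\val{\tnext\varphi})\cup\val{\tnext\psi}$. Setting $V=S[U]$, openness of $S$ makes $V$ open and clearly $S(x)\in V$. If $z\in V$, say $z=S(y)$ with $y\in U$, and $z\in\val\varphi$, then $y\in S^{-1}\val\varphi=\val{\tnext\varphi}$, and by the defining property of $U$ we get $y\in\val{\tnext\psi}$, whence $z=S(y)\in\val\psi$. Thus $S(x)\in\val{\varphi\to\psi}$ by Lemma~\ref{LemImpCrit}-style reasoning, i.e.\ $x\in\val{\tnext(\varphi\to\psi)}$.

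For ${\rm FS}_\diam(\varphi,\psi)$, the same idea must be iterated. Take $x\in\val{\diam\varphi\to\ubox\psi}$ and let $U\ni x$ be an open neighbourhood with $U\subseteq(X\setminus\val{\diam\varphi})\cup\val{\ubox\psi}$. Define
\[
W \;=\; \bigcup_{n\geq 0} S^n[U].
\]
Because $S$ is open, each $S^n[U]$ is open, so $W$ is open; and $S[W]=\bigcup_{n\geq 0} S^{n+1}[U]\subseteq W$, so $W$ is $S$-invariant; and $x\in S^0[U]=U\subseteq W$. To finish I need $W\subseteq \val{\varphi\to\psi}$; since $W$ is open, it suffices (by the definition of the interior) to show $W\subseteq (X\setminus\val\varphi)\cup\val\psi$. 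So pick $z\in W$ with $z\in\val\varphi$; then $z=S^n(y)$ for some $y\in U$ and some $n\geq 0$, giving $y\in S^{-n}\val\varphi\subseteq\val{\diam\varphi}$. Hence $y\in\val{\ubox\psi}$, i.e.\ there is an $S$-invariant open $U'\ni y$ with $U'\subseteq\val\psi$; then $z=S^n(y)\in U'\subseteq\val\psi$ by $S$-invariance, as required. This shows $x\in\val{\ubox(\varphi\to\psi)}$.

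The only real obstacle is ${\rm FS}_\diam$: one has to construct an $S$-invariant open neighbourhood of $x$ that witnesses $\ubox(\varphi\to\psi)$, and the naive choice $U$ itself need not be $S$-invariant. The cure is to saturate $U$ under forward orbits, which is open precisely because $S$ is an open map; without openness of $S$ the set $\bigcup_{n\geq 0}S^n[U]$ may fail to be open and the argument collapses, explaining why this axiom is only claimed for open systems.
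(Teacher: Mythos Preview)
Your proof is correct and follows essentially the same route as the paper: for ${\rm FS}_\tnext$ you push a witnessing neighbourhood forward by $S$, and for ${\rm FS}_\diam$ you saturate it under forward iterates to obtain an $S$-invariant open witness, exactly as the paper does. The only differences are cosmetic (variable names and a slightly more explicit justification of $S$-invariance and of the reduction to the pointwise inclusion).
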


\proof
Let $\mathcal M = (X,\mathcal T, S, \val \cdot)$ be a dynamical topological model where $S$ is an interior map. We check that axioms ${\rm FS}_\tnext$ and ${\rm FS}_\diam$ are valid on $\mathcal M$.
\medskip

\noindent (${\rm FS}_\tnext$) \ Suppose that $x \in \val {\tnext \varphi \to \tnext \psi}$, and let $U\subseteq \val {\tnext \varphi \to \tnext \psi}$ be a neighbourhood of $x$. Since $S$ is open, $V = S [U]$ is a neighbourhood of $S(x)$. Let $y \in V \cap \val \varphi$, and choose $z\in U$ so that $y = S(z)$. Then, $z\in U\cap \val {\tnext \varphi}$, so that $z\in \val {\tnext \psi}$, i.e.~$y\in \val \psi$. Since $y\in V$ was arbitrary, $S(x) \in \val{\varphi \to \psi}$, and $x\in \val{\tnext (\varphi \to \psi)}$.
\medskip

\noindent (${\rm FS}_\diam$) \ Suppose that $x \in \val {\diam \varphi \to \ubox \psi}$, and let $U\subseteq \val{\diam \varphi \to \ubox \psi}$ be a neighbourhood of $x$. Set $V = \bigcup _{n <\omega} S^n[U]$; since $S$ is open and unions of opens are open, $V$ is open as well. Moreover, $V$ is clearly $S$-invariant, as if $x \in V$, then $x \in S^n[U]$ for some $n\geq 0$, so that $S(x) \in S^{n+1}[U] \subseteq V$.

We claim that $V\subseteq \val{\varphi \to \psi}$, from which we obtain a witness that $\mathcal M, x \models {\ubox(\varphi \to \psi)}$. Suppose that $y \in V \cap \val \varphi$. By the definition of $V$, $y = S^n ( z )$ for some $n < \omega$ and some $z \in U$. Then, $z \in U \cap \val{\diam \varphi}$, so that $z\in \val {\ubox \psi}$. From this we may choose an $S$-invariant neighbourhood $Z \subseteq \val \psi$ of $z$. But $y = S^n(z) \in Z$ so that $ y \in \val \psi$, and since $y \in V$ was arbitrary we see that $V \subseteq \val{\varphi \to \psi}$, as needed.
\endproof

As an easy consequence, we mention the following combination of Theorems \ref{ThmSoundCD} and \ref{ThmSoundFS}. Recall that dynamic posets with an interior map are also called {\em persistent.}

\begin{corollary}\label{CorSoundOne}
$\logpers$ and $\logpers_\ubox$ are sound for the class of persistent dynamic posets.
\end{corollary}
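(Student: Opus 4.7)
The plan is to assemble the three soundness theorems already proved and observe that every persistent dynamic poset simultaneously satisfies the hypotheses of each. Concretely, let $\mathcal M = (X, {\peq}, S, \val \cdot)$ be a persistent dynamic poset model: by definition this is a dynamic poset for which $S$ is an interior map on the up-set topology $\mathcal T_\peq$. Hence $\mathcal M$ is at once a dynamic topological model (so Theorem \ref{ThmSoundZero} applies), a dynamic poset (so Theorem \ref{ThmSoundCD} applies), and an open dynamical system (so Theorem \ref{ThmSoundFS} applies).

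For soundness of $\logpers = \loghomeo + \logexp$, I would simply note that its axioms and rules decompose as those of $\logbasic$, plus ${\rm FS}_\tnext$ and ${\rm FS}_\diam$, plus ${\rm CD}$. Theorem \ref{ThmSoundZero} yields validity of the first group on $\mathcal M$, Theorem \ref{ThmSoundFS} yields validity of the Fischer Servi axioms (since $S$ is open), and Theorem \ref{ThmSoundCD} yields validity of ${\rm CD}$ (since $\mathcal M$ is a dynamic poset). A routine induction on derivations then gives $\mathcal M \models \varphi$ for every $\varphi$ with $\logpers \vdash \varphi$.

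For $\logpers_\ubox$, recall that $\Lambda_\ubox$ is obtained from $\Lambda$ by restricting to $\mathcal L_\ubox$ and, since ${\rm CD}$ is an axiom of $\logpers$, adding $\rm BI$. The $\mathcal L_\ubox$-restrictions of the $\logbasic$ axioms and of ${\rm FS}_\tnext$ are valid on $\mathcal M$ by the same appeals as before (these axioms already live in $\mathcal L_\ubox$). For $\rm BI$, I would avoid a fresh semantic calculation and instead invoke Proposition \ref{PropConstoBI}: since $\logbasic \vdash \AxCons pq \to \AxBInd pq$ and $\AxCons pq$ is valid on $\mathcal M$ by Theorem \ref{ThmSoundCD}, the validity of $\AxBInd pq$ follows from the validity of $\logbasic$ together with modus ponens (i.e., Lemma \ref{LemImpCrit}).

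There is essentially no obstacle here, since the two geometric conditions (the topology comes from a poset; $S$ is an interior map) are independent, and the three prior soundness theorems combine orthogonally. The only mildly delicate point is that ${\rm FS}_\diam$ and ${\rm CD}$ drop out of the $\ubox$-fragment, so the role of $\rm BI$ in $\logpers_\ubox$ must be justified via Proposition \ref{PropConstoBI} rather than by a direct semantic argument; but this is exactly what that proposition was designed to provide.
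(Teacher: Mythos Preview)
Your proposal is correct and matches the paper's approach: the corollary is stated there as an immediate combination of Theorems \ref{ThmSoundCD} and \ref{ThmSoundFS}, with no further proof given. One small simplification: for the $\rm BI$ axiom in $\logpers_\ubox$ you need not re-invoke Proposition \ref{PropConstoBI}, since Theorem \ref{ThmSoundCD} already asserts soundness of $\logexp_\ubox$ (and hence validity of $\rm BI$) over dynamic posets; but your route via the proposition is equally valid.
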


\section{Independence}\label{SecInd}

In this section we will use our soundness results to show that the four logics we have considered are pairwise distinct.

\subsection{Independence of the constant domain axioms}

The formulas $\rm CD$ and $\rm BI$ separate Kripke semantics from the general topological semantics.

\begin{proposition}\label{PropInd}
The formulas $\AxCons pq$ and $\AxBInd pq$ are not valid over the class of invertible dynamical systems based on~$\mathbb R$.
\end{proposition}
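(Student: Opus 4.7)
The plan is to exhibit a single invertible dynamical system on $\mathbb R$ together with a valuation refuting both $\AxCons pq$ and $\AxBInd pq$ simultaneously. A natural candidate is the expanding map $S(x) = 2x$ already studied in Example \ref{ExBoxOnR}; this is a homeomorphism of $\mathbb R$ (its inverse $x \mapsto x/2$ is continuous), so the dynamical system is invertible, and its key feature is that at the fixed point $0$ any $S$-invariant open neighbourhood is forced to equal all of $\mathbb R$.

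First I would set $\val p := (-1,1)$ and $\val q := \mathbb R \setminus \{0\}$, both of which are open. Since $\val p \cup \val q = \mathbb R$ and $\mathbb R$ itself is trivially $S$-invariant and open, the semantic clause for $\ubox$ gives $\val{\ubox(p \vee q)} = \mathbb R$; in particular $0 \models \ubox(p \vee q)$.

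Next I would show that $0 \not\models \ubox p$ and $0 \not\models \diam q$. The second is immediate because $S^n(0) = 0 \notin \val q$ for every $n$. For the first, I would reuse the expansion argument from Example \ref{ExBoxOnR}: any $S$-invariant open $U \ni 0$ contains an interval $(-\varepsilon,\varepsilon)$ and is then forced by iterating $S$ to equal all of $\mathbb R$; since $\val p = (-1,1) \neq \mathbb R$, no witnessing neighbourhood can sit inside $\val p$. Combining these refutes $\AxCons pq$ at $0$.

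For the backward induction axiom I would use the same model. The one extra ingredient is that $S$ is a bijection fixing $0$, so $S^{-1}\val q = \val q$, whence $\val{\tnext q} = \val q$ and by Lemma \ref{LemImpCrit} we get $\val{\tnext q \to q} = \mathbb R$; consequently $\val{\ubox(\tnext q \to q)} = \mathbb R$ as well. The antecedent of $\AxBInd pq$ therefore holds at $0$, but $\ubox p \vee q$ still fails there since $0 \notin \val q$ and $0 \notin \val{\ubox p}$ as shown. The only genuinely nontrivial step is verifying $0 \notin \val{\ubox p}$, and this is fully handled by the expansion argument of Example \ref{ExBoxOnR}; everything else is bookkeeping.
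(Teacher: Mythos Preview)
Your proof is correct and follows essentially the same approach as the paper: the same dynamical system $(\mathbb R, x\mapsto 2x)$, the same falsifying point $0$, and the same key ingredients (Example~\ref{ExBoxOnR} for $0\not\models\ubox p$, the fixed point $S^n(0)=0$ for $0\not\models\diam q$, and the observation that $\val{\tnext q\to q}=\mathbb R$). The only difference is cosmetic: the paper takes $\val p=(-\infty,1)$ and $\val q=(0,\infty)$, whereas you take $\val p=(-1,1)$ and $\val q=\mathbb R\setminus\{0\}$; both choices work for exactly the same reasons.
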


\proof
Define a model $\mathcal M$ on $\mathbb R$, with $S(x) =  2x$, $\lb p \rb =  (-\infty,1)$ and $\lb q \rb = (0,\infty)$.
Clearly $\val{p\vee q} = \mathbb R$, so that $\val{\ubox(p \vee q)} = \mathbb R$ as well.

Let us see that $ \mathcal M, 0 \not \models \AxCons pq $.
Since $\mathcal M, 0 \models {\ubox(p \vee q)}$, it suffices to show that $\mathcal M, 0 \not \models {\ubox p \vee \diam q}$. It is clear that $\mathcal M, 0 \not \models {\diam q}$ simply because $S^n(0) = 0 \not \in \val q$ for all $n$. Meanwhile, by Example \ref{ExBoxOnR}, $\mathcal M, 0 \models {\ubox p}$ if and only if $\val p = \mathbb R$, which is not the case. We conclude that $
\mathcal M, 0 \not \models \AxCons pq.
$

To see that $
\mathcal M, 0 \not \models \AxBInd pq
$
we proceed similarly, where the only new ingredient is observing that $\mathcal M, 0  \models \ubox (\tnext q\rightarrow q)$. But this follows easily from the fact that if $\mathcal M,x \models \tnext q$, then $x > 0$ so that $\mathcal M,x \models q$, hence $\val{\tnext q \to q} = \mathbb R$.
\endproof

\begin{corollary}\label{CorCDIndep}
$\loghomeo \not \vdash \AxCons pq$ and $\loghomeo \not \vdash \AxBInd pq$.
\end{corollary}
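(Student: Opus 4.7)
The plan is to derive the corollary as a direct application of soundness, combining Theorem \ref{ThmSoundFS} with the counter-model provided by Proposition \ref{PropInd}. The key observation is that the dynamical system used in Proposition \ref{PropInd} (namely $\mathbb R$ with $S(x) = 2x$ and the standard topology) is invertible: the map $S$ is a homeomorphism since both $S$ and $S^{-1}$ are continuous, and in particular $S$ is open. Hence this is an open dynamical system, and so falls within the class to which Theorem \ref{ThmSoundFS} applies.

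First I would recall from Theorem \ref{ThmSoundFS} that $\loghomeo$ is sound for the class of open dynamical systems, so that every $\loghomeo$-theorem is valid on $(\mathbb R,\mathcal T,S)$ regardless of the valuation. Then I would appeal to Proposition \ref{PropInd}, which exhibits a specific valuation $\val\cdot$ on this very system witnessing that neither $\AxCons pq$ nor $\AxBInd pq$ is valid. Combining these two facts by contraposition immediately yields $\loghomeo\not\vdash\AxCons pq$ and $\loghomeo\not\vdash\AxBInd pq$.

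There is essentially no obstacle here: the real content lies in the previous two results, and the corollary is a one-line consequence. The only thing worth making explicit in the write-up is the verification that $S(x)=2x$ is indeed a homeomorphism of $\mathbb R$, so that Theorem \ref{ThmSoundFS} applies; this is immediate from the fact that $S$ and $S^{-1}(x)=x/2$ are both continuous (in fact both are linear bijections of $\mathbb R$).
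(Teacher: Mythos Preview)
Your proposal is correct and mirrors the paper's own argument: combine the soundness of $\loghomeo$ for open dynamical systems (Theorem \ref{ThmSoundFS}) with the countermodel of Proposition \ref{PropInd}, noting that the latter system is open since $S(x)=2x$ is a homeomorphism. The only difference is that you make the openness of $S$ explicit, which the paper leaves implicit.
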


\proof
By Theorem \ref{ThmSoundFS}, $\loghomeo$ is sound for the class of open dynamical systems, but by Proposition \ref{PropInd}, $\AxCons pq$ is not valid on this class, hence $\loghomeo_\ubox \not\vdash \AxCons pq$. That $\loghomeo \not \vdash \AxBInd pq$ is obtained by the same reasoning.
\endproof

\ignore{

\begin{proposition}\label{PropBItoCons}
The formula $\AxBInd pq \to \AxCons pq$ is not valid over the class of invertible dynamical systems based on $\mathbb R$.
\end{proposition}

\proof
Consider a model $\mathcal M$ similar to that used in the proof of Proposition \ref{PropInd}, except that $\lb q \rb = \mathbb R \setminus [-\nicefrac 12,\nicefrac 12].$ Then, $\ubox(p \vee q)  \rightarrow \ubox p \vee \diam q$ fails at $0$ (by essentially the same reasoning).
However, it could easily be checked that $\val{\ubox(\tnext q \rightarrow q)} = \val q$.
Hence $0 \in \lb \neg \ubox (\tnext q\rightarrow q) \rb$, from which it readily follows that $0$ satisfies \[\ubox (\tnext q\rightarrow q) \rightarrow \big (\ubox ( p\vee q) \rightarrow  \ubox p \vee q \big).\]
Therefore $\AxBInd pq$ does not imply $\AxCons pq$ over the class of invertible dynamical systems.
\endproof

Note, however, that Proposition \ref{PropBItoCons} does not necessarily imply that there are no formulas $\varphi$, $\psi$ such that $\AxBInd \varphi\psi \to \AxCons pq$ is derivable, and hence it is reasonable to use $\rm BI$ in place of $\rm CD$ to axiomatize $\diam$-free logics.
}

\subsection{Independence of the Fischer Servi axioms}

The Fischer Servi axioms are also not valid in general, as shown in Boudou et al.~\cite{IMLA} (see Figure \ref{FigIMLA}).

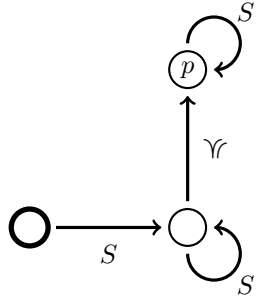
\begin{figure}

\begin{center}

\begin{tikzpicture}[scale=.7]

\def\y{1.3}





\draw[thick] (3,0) circle (.35);

\draw (3 , 0 ) node {$p$};

\draw[line width = 2] (0,-3) circle (.35);


\draw[very thick,-> ] (.5,-3) -- (2.5,-3);

\draw (1.5,-3.5) node {$S$};



\draw[thick] (3,-3) circle (.35);


\draw[very thick,->] (3,-2.5) -- (3,-.5);

\draw (3.5,-1.5) node {{\large$\rotatebox[origin=c]{90}{$\peq$}$}};

\draw[very thick,->] (3,-3.5) arc (-180:+90:.5) ;
 
\draw (4.1,-4.1) node {$S$};

\draw[very thick,->] (3,.5) arc (180:-90:.5) ;
 
\draw (4.1,1.1) node {$S$};

\end{tikzpicture}

\end{center}

\caption{A dynamic poset model falsifying both Fischer Servi axioms. Propositional variables that are true on a point are displayed; only one point satisfies $p$ and no point satisfies $q$. It can readily be checked that ${\rm FS}_\tnext(p,q)$ and ${\rm FS}_\diam(p,q)$ fail on the highlighted point on the left. Note that $S$ is continuous but not open, as can easily be seen by comparing to Figure \ref{FigCO}.}\label{FigIMLA}
\end{figure}

\begin{proposition}\label{prop:nvalid:iltl}
${\rm FS}_\tnext(p,q)$ and ${\rm FS}_\diam (p,q)$ are not valid over the class of dynamic posets.
\end{proposition}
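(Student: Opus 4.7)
The plan is to exhibit an explicit three-point dynamic poset, essentially the one drawn in Figure \ref{FigIMLA}, at which both Fischer Servi axioms fail simultaneously. Concretely, I take $W = \{u,v,w\}$ with the only nontrivial order relation $v \peq w$ (so $u$ is incomparable with both $v$ and $w$), and $S\colon W \to W$ defined by $S(u)=v$, $S(v)=v$, $S(w)=w$. A one-line check (the only nontrivial instance is $v \peq w$, which gives $S(v)=v\peq w=S(w)$) shows $S$ is monotone, hence continuous for the up-set topology, so $(W,{\peq},S)$ is a genuine dynamic poset. The valuation is $\val p=\{w\}$ (an upset, hence open) and $\val q=\varnothing$; the refuting point will be $u$.

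For ${\rm FS}_\tnext(p,q)$, the first step is to compute $\val{\tnext p}=S^{-1}\{w\}=\{w\}$ and $\val{\tnext q}=\varnothing$, and then $\val{p\to q}$ and $\val{\tnext p\to\tnext q}$ using the interior formula. Both reduce to taking the interior of $\{u,v\}$, and since $v\peq w\notin\{u,v\}$, the largest upset inside is $\{u\}$. Thus $\val{p\to q}=\{u\}$ and $\val{\tnext p\to\tnext q}=\{u\}$, while $\val{\tnext(p\to q)}=S^{-1}\{u\}=\varnothing$ because no point maps to $u$. Applying Lemma \ref{LemImpCrit}-style reasoning pointwise, $u\in\val{\tnext p\to\tnext q}$ but $u\notin\val{\tnext(p\to q)}$, so ${\rm FS}_\tnext(p,q)$ fails at $u$.

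For ${\rm FS}_\diam(p,q)$, I use the same model. One computes $\val{\diam p}=\bigcup_{n}S^{-n}\{w\}=\{w\}$ (since the only preimages of $w$ under iterates of $S$ lie in $\{w\}$), so $\val{\diam p\to\ubox q}=\{u\}$ exactly as before. For the consequent, $\val{p\to q}=\{u\}$, and I must find the largest $S$-invariant open subset of $\{u\}$; but $\{u\}$ itself fails $S$-invariance because $S(u)=v\notin\{u\}$, so $\val{\ubox(p\to q)}=\varnothing$. Thus $u\in\val{\diam p\to\ubox q}\setminus\val{\ubox(p\to q)}$, refuting ${\rm FS}_\diam(p,q)$ at $u$.

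There is no real obstacle here; the only thing to be careful about is correctly computing the intuitionistic interiors in the three-point poset, and in particular noticing that the failure of $S$ to be open (the witness $S(u)=v\peq w$ has no lift, as in Figure \ref{FigCO}) is exactly what prevents the upward-closure of $\val p$ from being reflected back by $S^{-1}$, which is precisely the content of both Fischer Servi axioms. I will present the construction, verify monotonicity of $S$, and then tabulate the four relevant valuations to conclude.
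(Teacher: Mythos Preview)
Your proposal is correct and uses precisely the three-point dynamic poset depicted in Figure~\ref{FigIMLA}, which is exactly the countermodel the paper invokes (via citation to \cite{IMLA}) without spelling out the verification. Your computations of the relevant valuations are accurate, and the argument is in fact more detailed than what appears in the paper itself.
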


From this and the soundness of $\loghomeo$ (Theorem \ref{ThmSoundFS}), we immediately obtain that they are not derivable in $\logbasic$.

\begin{corollary}\label{CorFSIndep}
$\logexp \not \vdash {\rm FS}_\tnext(p,q)$ and $\logexp \not \vdash {\rm FS}_\diam (p,q)$.
\end{corollary}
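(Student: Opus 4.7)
The plan is to mirror the argument used for Corollary \ref{CorCDIndep}, but swapping the roles of the two non-basic axiom families. Concretely, I would combine the soundness of $\logexp$ on a class of structures where the Fischer Servi axioms already fail, and then invoke contrapositive of soundness.

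First I would recall that Theorem \ref{ThmSoundCD} tells us $\logexp$ is sound for the class of dynamic posets: every theorem of $\logexp$ is valid on every dynamic poset model, because such models automatically satisfy $\logbasic$ (by Theorem \ref{ThmSoundZero}) together with the constant domain axiom $\rm CD$ (the content of Theorem \ref{ThmSoundCD}). Crucially, \emph{no} openness hypothesis on the transition function is required here, so the counterexamples available for $\loghomeo$ do not get ruled out.

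Next I would invoke Proposition \ref{prop:nvalid:iltl}, which exhibits a dynamic poset (the one depicted in Figure \ref{FigIMLA}) on which neither ${\rm FS}_\tnext(p,q)$ nor ${\rm FS}_\diam(p,q)$ holds. The point of this model is that its transition function $S$ is continuous but not open on the poset, which is precisely the confluence condition needed to validate the Fischer Servi axioms (cf.\ Theorem \ref{ThmSoundFS}); the offending witnesses are the highlighted point on the left, whose image under $S$ has a $\peq$-successor with no $\peq$-predecessor in the $S$-preimage.

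Finally, I would close the argument by contraposition: if either ${\rm FS}_\tnext(p,q)$ or ${\rm FS}_\diam(p,q)$ were derivable in $\logexp$, soundness on the class of dynamic posets would force it to be valid on every such model, contradicting Proposition \ref{prop:nvalid:iltl}. Hence $\logexp \not\vdash {\rm FS}_\tnext(p,q)$ and $\logexp \not\vdash {\rm FS}_\diam(p,q)$. There is no real obstacle here: the work was already done in Proposition \ref{prop:nvalid:iltl} and Theorem \ref{ThmSoundCD}, and the present corollary is just the expected corollary of combining them.
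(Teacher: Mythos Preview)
Your argument is correct and follows exactly the intended line of the paper: combine the soundness of $\logexp$ for dynamic posets (Theorem \ref{ThmSoundCD}) with the failure of the Fischer Servi axioms on the dynamic poset of Proposition \ref{prop:nvalid:iltl}, and conclude by contraposition. In fact your write-up is cleaner than the paper's one-line justification, which contains an apparent slip (it cites Theorem \ref{ThmSoundFS} and mentions $\logbasic$ where Theorem \ref{ThmSoundCD} and $\logexp$ are what is actually needed).
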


\ignore{
\proof
By Theorem \ref{ThmSoundFS}, $\loghomeo$ is sound for the class of open dynamical systems, but by Proposition \ref{PropInd}, $\AxCons pq$ is not valid on this class, hence $\loghomeo_\ubox \not\vdash \AxCons pq$. That $\loghomeo \not \vdash \AxBInd pq$ is obtained by the same reasoning.
\endproof
}

The above independence results are sufficient to see that our four logics are distinct.

\begin{theorem}\label{TheoDistinct}
The logics $\logbasic$, $\loghomeo$, $\logexp$ and $\logpers$ are pairwise distinct, as are $\logbasic_\ubox$, $\loghomeo_\ubox$, $\logexp_\ubox$ and $\logpers_\ubox$.
\end{theorem}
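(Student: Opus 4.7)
The plan is to observe that all the nontrivial inclusions are built into the definitions, namely $\logbasic \subseteq \loghomeo \subseteq \logpers$ and $\logbasic \subseteq \logexp \subseteq \logpers$, so the entire content of the theorem is that two well-chosen formulas separate the four logics. I would take the two separating formulas to be $\AxCons pq$ and ${\rm FS}_\tnext(p,q)$. By definition, $\AxCons pq$ is a theorem of $\logexp$ and $\logpers$, while Corollary~\ref{CorCDIndep} gives $\loghomeo \not\vdash \AxCons pq$, which a fortiori yields $\logbasic \not\vdash \AxCons pq$. Dually, ${\rm FS}_\tnext(p,q)$ is a theorem of $\loghomeo$ and $\logpers$, while Corollary~\ref{CorFSIndep} gives $\logexp \not\vdash {\rm FS}_\tnext(p,q)$, and hence $\logbasic \not\vdash {\rm FS}_\tnext(p,q)$ as well. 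Putting these facts together, $\logbasic$ sits strictly below both $\loghomeo$ and $\logexp$, the two intermediate logics are mutually incomparable (and so both strictly below $\logpers$), and $\logpers$ sits strictly above both of them, yielding all six inequalities.

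For the $\ubox$-fragment I would argue in parallel using $\AxBInd pq$ in place of $\AxCons pq$, noting that both $\AxBInd pq$ and ${\rm FS}_\tnext(p,q)$ lie in $\mathcal L_\ubox$. The one nontrivial observation is the transfer principle $\Lambda_\ubox \subseteq \Lambda$ for each $\Lambda \in \{\logbasic, \loghomeo, \logexp, \logpers\}$: every generating axiom and inference rule of $\Lambda_\ubox$ is either already an axiom or rule of $\Lambda$, or is $\AxBInd pq$, in which case it is a theorem of $\Lambda$ by Proposition~\ref{PropConstoBI} since $\Lambda$ then has $\rm CD$ as an axiom. Consequently, non-derivability of an $\mathcal L_\ubox$-formula in $\Lambda$ implies non-derivability in $\Lambda_\ubox$. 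Applying this transfer to Corollaries~\ref{CorCDIndep} and~\ref{CorFSIndep} gives $\loghomeo_\ubox \not\vdash \AxBInd pq$ and $\logexp_\ubox \not\vdash {\rm FS}_\tnext(p,q)$, while $\logexp_\ubox$ and $\logpers_\ubox$ prove $\AxBInd pq$ and $\loghomeo_\ubox$ and $\logpers_\ubox$ prove ${\rm FS}_\tnext(p,q)$ by definition. The same schematic argument as in the first half then yields the six inequalities for the $\ubox$-fragment.

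There is no real obstacle to this proof: it is pure bookkeeping on top of the two independence corollaries. The only step requiring a moment's care is the transfer $\Lambda_\ubox \subseteq \Lambda$ used in the $\ubox$-fragment argument, which is precisely what Proposition~\ref{PropConstoBI} was designed to enable by making $\rm BI$ a theorem of any logic that contains $\rm CD$.
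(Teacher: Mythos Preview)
Your proposal is correct and follows essentially the same route as the paper: use $\AxCons pq$ and ${\rm FS}_\tnext(p,q)$ to show $\loghomeo$ and $\logexp$ are incomparable, then observe that $\logbasic$ lies in their intersection and $\logpers$ contains their union; for the $\ubox$-fragment replace $\rm CD$ by $\rm BI$. The only difference is that you make explicit the transfer $\Lambda_\ubox \subseteq \Lambda$ (justified via Proposition~\ref{PropConstoBI}), whereas the paper simply says the $\mathcal L_\ubox$ argument is ``analogous'' and leaves this step implicit.
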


\proof
By Corollary \ref{CorCDIndep} and the definition of $\logexp$, $\AxCons pq \in \logexp \setminus \loghomeo$; similarly, by Corollary \ref{CorFSIndep}, ${\rm FS}_\tnext (p,q) \in \loghomeo \setminus \logexp$. Thus $\loghomeo$ and $\logexp$ are incomparable, from which we conclude that $\logbasic$, which is contained in their intersection, is strictly smaller than either of them, while $\logpers$, which contains their union, is strictly larger. The argument for the logics over $\mathcal L_\ubox$ are analogous, except that $\rm CD$ is replaced with $\rm BI$.
\endproof

\section{Types and quasimodels}\label{SecNDQ}

In this section we review non-deterministic quasimodels \cite{FernandezITLc}.
Quasimodels will be our fundamental tool for passing from topological to Kripke semantics.

\subsection{Two-sided types}

Our presentation will differ slightly from that of \cite{FernandezITLc}, since it will be convenient for us to use two-sided types, defined as follows.

\begin{definition}\label{def:type}
  Let $\Phi^-,\Phi ^ + \subseteq \mathcal L_\diam$ be finite sets of formulas. We say that the pair $\Phi = (\Phi ^- ; \Phi ^+) $ is a {\em two-sided type} if:
  \begin{enumerate}

  \item $\Phi^- \cap \Phi ^ +  = \varnothing$,
        \label{cond:type:intersection}

  \item $\bot\not\in \Phi ^ +$,
        \label{cond:type:bot}

  \item if $\varphi\wedge\psi\in \Phi ^ +$, then $\varphi,\psi\in \Phi^+$,
        \label{cond:type:posconj}

  \item if $\varphi\wedge\psi\in \Phi ^ -$, then  $\varphi \in \Phi ^-$ or $\psi\in \Phi^-$,
        \label{cond:type:negconj}

  \item if $\varphi\vee\psi\in \Phi ^ +$, then $\varphi \in \Phi^+$ or $\psi\in \Phi^+$,
        \label{cond:type:posdisj}

  \item if $\varphi\vee\psi\in \Phi ^ -$, then  $\varphi , \psi\in \Phi^-$,
        \label{cond:type:negdisj}

  \item if $\varphi\to\psi\in \Phi^+$, then either $\varphi \in \Phi^-$ or $\psi \in\Phi^+$, and
        \label{cond:type:implication}

\item\label{cond:type:diam} if $\diam \varphi \in \Phi^-$ then $\varphi \in \Phi^-$.

  \end{enumerate}
  The set of finite two-sided types will be denoted $\type{}$.
  Whenever $\Xi$ is an expression denoting a two-sided type,
  we write $\Xi^-$ and $\Xi^+$ to denote its components.
\end{definition}

We will consider two partial orders on $\type{}$. We will write
\begin{enumerate}[label=(\alph*)]

\item $\Phi \peqT \Psi$ if $\Psi^-\subseteq \Phi^-$ and $\Phi^+ \subseteq \Psi^+$, and

\item $\Phi \sqsubT \Psi$ if $\Phi^- = \Psi^-$ and $\Phi^+\subseteq \Psi^+$.

\end{enumerate}

If $\Phi$ is a two sided-type and $\Sigma = \Phi^- \cup \Phi^+$, we may say that $\Phi$ is a {\em two-sided $\Sigma$-type.}
The set of two-sided $\Sigma$-types will be denoted by $\type\Sigma$.

\begin{remark}\label{RemarkTypes}
Fern\'andez-Duque \cite{FernandezITLc} uses one-sided $\Sigma$-types, but it is readily checked that a one-sided type $\Phi$ as defined there can be regarded as a two-sided type $\Psi$ by setting $\Psi^+=\Phi$ and $\Psi^- = \Sigma \setminus \Phi$.
Henceforth we will write {\em type} instead of {\em two-sided type} and explicitly write {\em one-sided type} when discussing \cite{FernandezITLc}.
\end{remark}

\subsection{Quasimodels}

Quasimodels are similar to models, except that valuations are replaced with a labelling function $\ell$. We first define the more basic notion of {\em labelled frame.}

\begin{definition}\label{frame}
  A {\em labelled frame} is a triple $\mathcal F= ( W,{\peq},\ell )$,
  where $\peq$ is a partial order on $W$
  and $\ell\colon W\to \type{}$ is such that 
  \begin{enumerate}[label=(\alph*)]
    \item whenever $w \peq v$ it follows that $\ell(w) \peqT \ell(v)$, and
          \label{cond:frame:monotony}
    \item whenever $\varphi\to\psi \in \ell^-(w)$, there is $v \seq w$ such that $\varphi\in \ell^+(v)$ and $\psi \in \ell^-(v)$,
  \end{enumerate}
  where $(\ell^-(v), \ell^+(v)) \eqdef \ell(v)$.
 
 We say that $\mathcal F$ {\em satisfies} $\varphi\in\mathcal L$ if $\varphi\in \ell^+(w)$ for some $w\in W$,
  and that it {\em falsifies} $\varphi$ if $\varphi \in \ell^-(w)$ for some $w \in W$.
  If $\ell (w) \in \type \Sigma$ for all $w\in W$, we say that $\mathcal F$ is a {\em $\Sigma$-labelled frame.}
\end{definition}

Labelled frames model only the intuitionistic aspect of the logic.
For the temporal dimension, let us define a new relation over types.

\begin{definition}\label{compatible}
  We define a relation $\ST \subseteq \type{} \times \type{}$ by $\Phi \ST \Psi$ iff for all $\varphi \in \mathcal L$:
  \begin{enumerate}[label=(\alph*)]
  \item\label{ItCompOne} if $\tnext\varphi\in \Phi^+$ then $ \varphi\in \Psi^+$,
  \item\label{ItCompTwo} if $\tnext\varphi\in \Phi^-$ then $ \varphi\in \Psi^-$,
  \item\label{ItCompThree} if $\diam\varphi\in \Phi^+$, then $\varphi\in\Phi^+$ or $\diam\varphi\in \Psi^+,$ and
  \item\label{ItCompFour} if $\diam\varphi\in \Phi^-$, then $\diam \varphi \in \Psi^-$.
  \end{enumerate}
\end{definition}

Quasimodels are then defined as labelled frames with a suitable binary relation.

\begin{definition} \label{def:quasimodel}
  A \emph{quasimodel} is a tuple $\mathcal Q = (W, \mathord{\peq}, S, \ell)$
  where $(W, \mathord{\peq}, \ell)$ is a labelled frame
  and $S$ is a binary relation over $W$ that is
  \begin{description}
  
    \item[{\em serial:}] for all $w$ there is $v$ such that $w \mathrel S v$;
    \item[{\em forward-confluent:}] if $w \peq w'$ and $w \mathrel S v$, there is $v'$ such that $v\peq v'$ and $w'\mathrel S v'$;
    \item[{\em sensible:}] $w \mathrel S x$ implies $\ell(w) \ST \ell(x)$, and
    \item[{\em $\omega$-sensible:}] whenever $\diam\varphi\in \ell^+(w)$, there are $n\geq 0$ and $v$ such that $w \mathrel S^n v$ and $\varphi\in \ell^+(v)$.

  \end{description}
  If $(W, \mathord{\peq}, \ell)$ is a $\Sigma$-labeled frame then $\mathcal Q$ is a {\em $\Sigma$-quasimodel.}
  If $S$ is a function then $\mathcal Q$ is {\em deterministic.}
\end{definition}

The forward confluence condition plays the role of continuity in the non-deterministic setting; indeed, if $S$ is deterministic, then it is easy to see that $S$ is forward-confluent if and only if it is monotone, which as we have discussed, is equivalent to continuity with respect to the up-set topology.
In fact, deterministic quasimodels are essentially dynamic posets with a particular valuation, as witnessed by the following version of the `truth lemma':

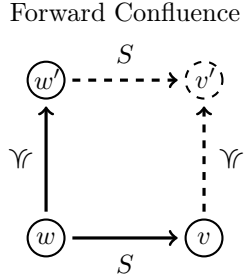
\begin{figure}

\begin{center}

\begin{tikzpicture}[scale=.7]

\def\y{1.3}

\draw (1.5,\y) node {Forward Confluence};

\draw[thick] (0,0) circle (.35);

\draw (.03,0+.02) node {$w'$};

\draw[very thick,->,dashed] (.5,0) -- (2.5,0);

\draw (1.5,.5) node {$S$};

\draw[thick,dashed] (3,0) circle (.35);

\draw (3+.03,.02+.02) node {$v'$};

\draw[thick] (0,-3) circle (.35);

\draw (0,-3) node {$w$};

\draw[very thick,-> ] (.5,-3) -- (2.5,-3);

\draw (1.5,-3.5) node {$S$};

\draw[very thick,->] (0,-2.5) -- (0,-.5);

\draw (-.5,-1.5) node {{\large$\rotatebox[origin=c]{90}{$\peq$}$}};

\draw[thick] (3,-3) circle (.35);

\draw (3,-3) node {$v$};

\draw[very thick,->,dashed] (3,-2.5) -- (3,-.5);

\draw (3.5,-1.5) node {{\large$\rotatebox[origin=c]{90}{$\peq$}$}};

\end{tikzpicture}

\end{center}

\caption{If $S$ is forward-confluent, then the above diagram can always be completed.}
\end{figure}

\begin{lemma}\label{LemTruth}
  Let ${\mathcal Q}=(W,{\peq},S,\ell)$ be a deterministic quasimodel, and define a valuation $\val\cdot^{\ell}$ on $ {\mathcal Q}$ by setting
$\val p^\ell =\{w \in W : p \in \ell^+(w)\}$
and extending to all of $\mathcal L$ recursively.
Then, for all formulas $\varphi \in \mathcal L_\diam$ and for all $w \in { W}$,
	\begin{enumerate}[label=(\arabic*)]
		\item if $\varphi \in { \ell}^+(w)$ then  $w \in \val{\varphi}^{\ell}$, and
		\item if $\varphi \in { \ell}^-(w)$ then  $w \not \in \val{\varphi}^{\ell}$.
	\end{enumerate}
\end{lemma}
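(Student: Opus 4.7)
The plan is to proceed by simultaneous induction on the structure of $\varphi \in \mathcal L_\diam$, proving both clauses (1) and (2) at once. The base cases $\varphi = p$ and $\varphi = \bot$ are immediate: the former from the definition of $\val p^\ell$ together with the disjointness condition \ref{cond:type:intersection} in Definition \ref{def:type}, and the latter from condition \ref{cond:type:bot}, since $\bot \not\in \ell^+(w)$ rules out the hypothesis of clause (1). The conjunction and disjunction cases are routine, using the closure conditions \ref{cond:type:posconj}--\ref{cond:type:negdisj}.

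The main obstacle is the implication case. For clause (1), suppose $\varphi\to\psi \in \ell^+(w)$; to show $w\in\val{\varphi\to\psi}^\ell$ in the poset topology it suffices, by \eqref{EqIntPoset}, to take any $v\seq w$ with $v\in\val\varphi^\ell$ and derive $v\in\val\psi^\ell$. Here I would use monotonicity \ref{cond:frame:monotony} of the labelling to conclude $\varphi\to\psi\in\ell^+(v)$, so that condition \ref{cond:type:implication} forces $\varphi\in\ell^-(v)$ or $\psi\in\ell^+(v)$. The first disjunct contradicts $v\in\val\varphi^\ell$ via the inductive hypothesis for clause (2), so $\psi\in\ell^+(v)$ and the inductive hypothesis for clause (1) gives $v\in\val\psi^\ell$. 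For clause (2), if $\varphi\to\psi\in\ell^-(w)$, condition (b) of Definition \ref{frame} yields a witness $v\seq w$ with $\varphi\in\ell^+(v)$ and $\psi\in\ell^-(v)$; the inductive hypotheses deliver $v\in\val\varphi^\ell\setminus\val\psi^\ell$, falsifying $\varphi\to\psi$ at $w$.

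For $\tnext\varphi$, determinism of $S$ lets me speak of the unique successor $S(w)$, and sensibility \ref{ItCompOne}--\ref{ItCompTwo} transfers membership of $\tnext\varphi$ in $\ell^{\pm}(w)$ to membership of $\varphi$ in $\ell^{\pm}(S(w))$, after which the inductive hypothesis finishes both clauses. For $\diam\varphi$, clause (1) is handled by $\omega$-sensibility: if $\diam\varphi\in\ell^+(w)$ then some $n\geq 0$ and $v$ satisfy $w\mathrel{S^n} v$ and $\varphi\in\ell^+(v)$; since $S$ is a function, $v = S^n(w)$, and the inductive hypothesis gives $S^n(w)\in\val\varphi^\ell$, hence $w\in\val{\diam\varphi}^\ell$. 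Clause (2) is proved by an auxiliary induction on $n$ showing that $\diam\varphi\in\ell^-(w)$ implies $\varphi\in\ell^-(S^n(w))$ for every $n$: condition \ref{cond:type:diam} handles $n=0$, and condition \ref{ItCompFour} of Definition \ref{compatible} combined with sensibility propagates $\diam\varphi$ into every $\ell^-(S^n(w))$, whence \ref{cond:type:diam} applied pointwise gives $\varphi\in\ell^-(S^n(w))$. The outer inductive hypothesis then yields $S^n(w)\notin\val\varphi^\ell$ for all $n$, so $w\notin\val{\diam\varphi}^\ell$.
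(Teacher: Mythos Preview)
Your proposal is correct and follows essentially the same structural induction as the paper's proof, with the same handling of each connective; in particular, your treatment of implication via monotonicity and condition~\ref{cond:type:implication}, of $\tnext$ via sensibility, and of $\diam$ via $\omega$-sensibility for the positive clause and an inner induction propagating $\diam\varphi\in\ell^-(S^n(w))$ for the negative clause, all match the paper exactly.
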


\begin{proof}
We proceed by structural induction on $\varphi$. We must consider the following cases.
\medskip

\noindent ($\varphi = p$ is an atom) \ Note that by definition of $\val{p}^{\ell}$, if $p \in {\ell}^+(w)$ then $w \in \val{p}^{\ell}$ and if $p \in {\ell}^-(w)$ then $p \not \in {\ell}^+(w)$ so $w \not \in \val{p}^{\ell}$.
\medskip

\noindent ($\varphi = \psi \wedge \theta$) \ Assume that $\psi \wedge \theta \in {\ell}^+(w)$. By Definition~\ref{def:type} it follows that $\psi \in {\ell}^+(w)$ and $\theta \in {\ell}^+(w)$. By induction hypothesis,  $w \in \val{\psi}^{\ell}$ and $w \in \val{\theta}^{\ell}$. Therefore $w \in \val{\psi \wedge \theta}^{\ell}$.

If $\psi \wedge \theta \in {\ell}^-(w)$, by definition~\ref{def:type} it follows that either $\psi \in {\ell}^-(w)$ or $\theta \in {\ell}^-(w)$. By induction hypotheses we conclude that $w \not \in \val{\psi}^{\ell}$ or $w \not \in \val{\theta}^{\ell}$. Therefore $w \not \in \val{\psi \wedge \theta}^{\ell}$.
\medskip

\noindent ($\varphi = \psi \vee \theta$) This case is symmetric, but using the conditions for $\vee$.
\medskip

\noindent ($\varphi = \psi \to \theta$) \
Assume first that $\psi \to \theta \in {\ell}^+(w)$.
Then for all $y$ such that $w \peq y$, by condition~\ref{cond:frame:monotony} of Definition~\ref{frame}, $\psi \to \theta \in \ell^+(y)$.
By condition~\ref{cond:type:implication} of Definition~\ref{def:type} and by induction hypothesis, $y \notin {\val \psi}^\ell$ or $y \in {\val \theta}^\ell$.
Therefore, $w \in {\val{\psi \to \theta}}^\ell$.

Now let us assume that $\psi \to \theta \in {\ell}^-(w)$. By Definition~\ref{frame} it follows that there exists $v \in W$ such that $w\peq v$ and $\psi \in {\ell}^+(v)$ and $\theta \in {\ell}^-(v)$. By induction hypothesis it follows that $v  \in \val{\psi}^{\ell} \setminus \val{\theta}^{\ell}$, which means that $w \not \in \val{\psi\to \theta}^{\ell}$.
\medskip

\noindent ($\varphi = \tnext \psi$) \ Assume that $\circ \psi \in {\ell}^+(w)$. Since $S$ is sensible, $\psi \in {\ell}^+(S(w))$. By induction hypothesis $S(w) \in \val{\psi}^{\ell}$. Therefore $w \in \val{\circ \psi}^{\ell}$. The case where $\circ \psi \in {\ell}^-(w)$ is analogous.
\medskip

\noindent ($\varphi = \diam \psi$) If $\diam \psi \in {\ell}^+(w)$, by the fact that $S$ is $\omega$-sensible there exists $v\in W$ such that $w \mathrel S^n v$ and $\psi \in {\ell}^+(v)$; since $S$ is deterministic, we must forcibly have $v=S^n(w)$. By induction hypothesis we conclude that $v \in \val{\psi}^{\ell}$ and by the satisfaction relation it follows that $w \in \val{\diam \psi}^{\ell}$.

In case that $\diam \psi \in {\ell}^-(w)$, observe that for all $n$, if $\diam \psi \in {\ell}^-(S^n(w))$ then $\diam \psi \in {\ell}^-(S^{n+1}(w))$; thus by induction, $\diam \psi \in {\ell}^-(S^n(w))$ for all $n<\omega$. In virtue of Definition \ref{def:type}.\ref{cond:type:diam}, $\psi \in {\ell}^-(S^n(w))$ for all $n<\omega$, hence by the induction hypothesis $S^n(w) \not \in \val \psi$, from which it follows that $w \not \in \val{\diam \psi}$.
\end{proof}

In the non-deterministic case quasimodels are not models as they stand, but in \cite{FernandezITLc}, it is shown that dynamical systems can be extracted from them.

\begin{theorem}\label{TheoITLc}
A $\ubox$-free formula $\varphi$ is satisfiable (falsifiable) over the class of dynamic topological systems if and only if it is satisfiable (falsifiable) over the class of finite, ${\rm sub}(\varphi)$-quasimodels.
\end{theorem}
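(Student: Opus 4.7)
The plan is to prove both implications, handling satisfiability and falsifiability uniformly by exploiting the symmetric roles of $\ell^+$ and $\ell^-$ in two-sided types. The forward direction reduces any satisfying topological model to a finite quasimodel via a canonical labelling followed by a filtration on types, while the converse requires the construction of a dynamic topological system whose points are $\omega$-sensible paths through the given quasimodel.

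For the ``only if'' direction, given a dynamic topological model $\mathcal M = (X,\mathcal T,S,\val\cdot)$ and $x_0 \in X$ witnessing satisfaction (respectively, falsification) of $\varphi$, I would define a labelling on $X$ by $\ell^+(x) = \{\psi \in \mathrm{sub}(\varphi) : x \in \val\psi\}$ and $\ell^-(x) = \{\psi \in \mathrm{sub}(\varphi) : x \notin \val\psi\}$. Verifying the clauses of Definition~\ref{def:type} is routine: the Boolean cases reflect the set-theoretic behaviour of $\val\cdot$, the implication clause follows from the neighbourhood characterisation of $\val{\psi\to\theta}$, and the clause for $\diam$ follows from the soundness of axiom~\ref{ax10DiamFix} (Theorem~\ref{ThmSoundZero}). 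Passing to the specialisation preorder of $\mathcal T$ yields a labelled frame, and sensibility together with $\omega$-sensibility of $S$ translate directly from the topological semantics for $\tnext$ and $\diam$. Because $\mathrm{sub}(\varphi)$ is finite, only finitely many types appear; collapsing points of the same label, and choosing for each pair of equivalence classes a single representative $S$-transition, produces a finite $\mathrm{sub}(\varphi)$-quasimodel that still satisfies (respectively, falsifies) $\varphi$.

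For the ``if'' direction, I would follow the path-space construction of \cite{FernandezITLc}, transferred to the two-sided setting via Remark~\ref{RemarkTypes}. Given a finite quasimodel $\mathcal Q = (W,\peq,S,\ell)$ with $\varphi \in \ell^+(w_0)$, let $X$ be the set of all $\omega$-sensible infinite paths $\pi = \pi_0\pi_1\dots$ through $\mathcal Q$, meaning sequences with $\pi_i \mathrel S \pi_{i+1}$ in which every eventuality $\diam\psi \in \ell^+(\pi_i)$ is realised at some later $\pi_j$. Topologise $X$ by taking as subbasic opens the sets $\{\pi \in X : \pi_n \succcurlyeq w\}$ for $n<\omega$ and $w \in W$, and take $\sigma\colon X \to X$ to be the shift $\sigma(\pi)_i = \pi_{i+1}$, which is continuous. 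With valuation $\val p = \{\pi \in X : p \in \ell^+(\pi_0)\}$, the bulk of the work is a truth lemma asserting that $\psi \in \ell^+(\pi_0)$ implies $\pi \in \val\psi$ and $\psi \in \ell^-(\pi_0)$ implies $\pi \notin \val\psi$, proved by structural induction on $\psi \in \mathrm{sub}(\varphi)$. The implication case uses clause~\ref{cond:type:implication} of Definition~\ref{def:type} together with forward-confluence to lift refuting successors to full refuting paths, while the $\diam$ case uses $\omega$-sensibility for the positive direction and iteration of Definition~\ref{compatible}.\ref{ItCompFour} for the negative one.

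The principal obstacle is showing that $X$ is inhabited richly enough: every $w \in W$ must occur as the initial node of some $\omega$-sensible path, and the subbasic opens must similarly be non-empty so that the topology reflects the order structure of $\mathcal Q$. Finiteness of $\mathcal Q$ is essential here, since it permits a round-robin scheduling of eventualities that cyclically addresses the finitely many pending $\diam$-formulas while respecting seriality and forward-confluence. This construction is the technical heart of \cite{FernandezITLc} and transfers to the present setup with only notational adjustments to accommodate two-sided types; the absence of $\ubox$ means that none of the topological subtleties displayed in Example~\ref{ExBoxOnR} come into play.
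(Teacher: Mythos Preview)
The paper does not prove this theorem; it is quoted from \cite{FernandezITLc}, with Remark~\ref{RemarkTypes} noting that the one-sided types used there translate directly into the two-sided types of the present paper. So there is nothing to compare against beyond the citation, and your sketch is in the spirit of the construction in that reference.

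That said, your ``only if'' direction contains a genuine gap. You propose to obtain a labelled frame by passing to the specialisation preorder of $\mathcal T$. This does not work for arbitrary topological spaces: on any $T_1$ space (for instance $\mathbb R$ with its standard topology, which the paper itself uses in Example~\ref{ExBoxOnR} and Proposition~\ref{PropInd}) the specialisation preorder is discrete, so the labelled-frame clause for implication cannot be witnessed. If $x \notin \val{\psi\to\theta}$ then every neighbourhood of $x$ meets $\val\psi\setminus\val\theta$, but no such point need lie above $x$ in the specialisation order. The preorder you actually want is the one induced by the finite family $\{\val\psi:\psi\in\mathrm{sub}(\varphi)\}$, equivalently the pullback of $\peqT$ along $\ell$; the refuting witness is then found inside the open set $\bigcap\{\val\psi:\psi\in\mathrm{sub}(\varphi),\ x\in\val\psi\}$, which is a neighbourhood of $x$ by finiteness of $\mathrm{sub}(\varphi)$. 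A secondary issue: ``choosing a single representative $S$-transition'' between equivalence classes is too restrictive and can destroy forward-confluence and $\omega$-sensibility; the quotient relation should retain all induced transitions, which is precisely why quasimodels are allowed to be non-deterministic.
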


Theorem \ref{TheoITLc} applies even to formulas in an extended language $\mathcal L_{\diam\forall}$ with the universal modality, but it is shown in \cite{FernandezITLc} that there are topologically falsifiable $\mathcal L_{\diam\forall}$-formulae that are not Kripke falsifiable.
As we will see, this is not the case over $\mathcal L_{\diam}$.
Note that \cite{FernandezITLc} uses quasimodels with one-sided types, but in view of Remark \ref{RemarkTypes}, the theorem can easily be modified to obtain quasimodels with two-sided types.

\subsection{Restrictions on types}

The reason that two-sided types are convenient is that they can easily be restricted to smaller sets of formulas while maintaining the relations between them. To make this precise, if $\Sigma$ is a set of formulas, first define $\Psi\upharpoonright \Sigma = (\Psi^- , \Psi^+\cap \Sigma)$, and ${\rm sub} (\Sigma) = \bigcup _{\varphi \in \Sigma}{\rm sub}(\varphi)$. With this, we have the following:

\begin{lemma}\label{LemmRestrict}
Let $\Phi,\Psi,\Gamma,\Theta$ be types and $\Sigma$ a set of formulas closed under subformulas. Then,
\begin{enumerate}

\item\label{cond:alsotype} $\Phi\upharpoonright \Sigma$ is also a type;

\item\label{ItCrossTrans} if $\Gamma \sqsubT \Phi \peqT \Psi$ or $\Gamma \peqT \Phi \sqsubT \Psi$ then $\Gamma \peqT \Psi$, and


\item\label{ItRestrictSqsub} if $\Gamma \sqsubT \Phi \ST \Psi$ and ${\rm sub} ( \Gamma^+ ) \subseteq \Sigma$, then $\Gamma \ST \Psi \upharpoonright \Sigma$.


\end{enumerate}
\end{lemma}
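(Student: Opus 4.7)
The plan is to prove the three items independently by unpacking the definitions of two-sided type (Definition~\ref{def:type}), the orders $\peqT$ and $\sqsubT$, and the compatibility relation $\ST$ (Definition~\ref{compatible}).

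For item~\ref{cond:alsotype}, I would verify each of the eight clauses defining a type for $\Phi\upharpoonright \Sigma = (\Phi^-, \Phi^+ \cap \Sigma)$. Disjointness of positive and negative parts and the exclusion of $\bot$ are immediate, since the negative side is unchanged and the positive side has only shrunk. The three negative closure clauses, on $\wedge$, $\vee$, and $\diam$, transfer verbatim from $\Phi$ because $(\Phi\upharpoonright\Sigma)^- = \Phi^-$. For the three positive closure clauses, on $\wedge$, $\vee$, and $\to$, I would use that $\Sigma$ is closed under subformulas: for instance, if $\varphi \wedge \psi \in \Phi^+ \cap \Sigma$ then $\varphi, \psi \in \Phi^+$ because $\Phi$ is a type, and $\varphi, \psi \in \Sigma$ by subformula closure, so both land in $\Phi^+ \cap \Sigma$; the other two positive clauses are handled the same way.

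For item~\ref{ItCrossTrans}, I would simply chain the set inclusions. Given $\Gamma \sqsubT \Phi \peqT \Psi$, we have $\Gamma^- = \Phi^- \supseteq \Psi^-$ and $\Gamma^+ \subseteq \Phi^+ \subseteq \Psi^+$, which is exactly $\Gamma \peqT \Psi$; the other ordering is treated symmetrically.

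For item~\ref{ItRestrictSqsub}, I would verify each of the four clauses of Definition~\ref{compatible} for $\Gamma$ and $\Psi \upharpoonright \Sigma$. Since $\Gamma^- = \Phi^-$ and $(\Psi \upharpoonright \Sigma)^- = \Psi^-$, the two negative clauses for $\tnext$ and $\diam$ transfer directly from $\Phi \ST \Psi$. For the positive $\tnext$ clause, if $\tnext\varphi \in \Gamma^+ \subseteq \Phi^+$ then $\varphi \in \Psi^+$ by $\Phi \ST \Psi$, while $\varphi \in {\rm sub}(\tnext\varphi) \subseteq {\rm sub}(\Gamma^+) \subseteq \Sigma$, so $\varphi \in \Psi^+ \cap \Sigma$. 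The positive $\diam$ clause is the main technical point: if $\diam\varphi \in \Gamma^+ \subseteq \Phi^+$, then $\Phi \ST \Psi$ yields $\varphi \in \Phi^+$ or $\diam\varphi \in \Psi^+$, and in the latter subcase I would use $\diam\varphi \in {\rm sub}(\Gamma^+) \subseteq \Sigma$ to conclude $\diam\varphi \in \Psi^+ \cap \Sigma$, as required. The remaining subcase, where $\varphi \in \Phi^+$ and one needs to place $\varphi$ back in $\Gamma^+$, is the obstacle I expect to require the most care: it relies on the observation that $\varphi \in {\rm sub}(\diam\varphi) \subseteq \Sigma$ together with the precise sense in which $\Gamma$ sits as a sub-type of $\Phi$ relative to $\Sigma$, so that every positive $\Sigma$-formula already accounted for in $\Phi$ is also accounted for in $\Gamma$.
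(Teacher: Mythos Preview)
Your treatments of items~\ref{cond:alsotype} and~\ref{ItCrossTrans} are correct and essentially identical to the paper's argument, as are the negative clauses and the positive $\tnext$-clause in item~\ref{ItRestrictSqsub}.

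The gap is exactly where you flagged it. In the subcase of clause~\ref{ItCompThree} of Definition~\ref{compatible} where $\diam\varphi \in \Gamma^+ \subseteq \Phi^+$ and $\Phi \ST \Psi$ yields the disjunct $\varphi \in \Phi^+$, you need $\varphi \in \Gamma^+$; you propose to get this because ``every positive $\Sigma$-formula already accounted for in $\Phi$ is also accounted for in $\Gamma$''. That claim does not follow from the stated hypotheses: $\Gamma \sqsubT \Phi$ says only $\Gamma^- = \Phi^-$ and $\Gamma^+ \subseteq \Phi^+$, and nothing in the lemma forces the reverse inclusion $\Phi^+ \cap \Sigma \subseteq \Gamma^+$. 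Concretely, take $\Sigma = \{p,\diam p\}$, $\Gamma = (\varnothing;\{\diam p\})$, $\Phi = (\varnothing;\{p,\diam p\})$, $\Psi = (\varnothing;\varnothing)$. All three are types, $\Gamma \sqsubT \Phi$, ${\rm sub}(\Gamma^+) \subseteq \Sigma$, and $\Phi \ST \Psi$ holds via $p \in \Phi^+$; but $\Gamma \ST \Psi\upharpoonright\Sigma$ fails, since $\diam p \in \Gamma^+$ while $p \notin \Gamma^+$ and $\diam p \notin \Psi^+$.

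The paper's own proof does not close this subcase either: at the same point it simply asserts ``either $\psi \in \Gamma^+$ or $\diam\psi \in \Psi^+$'' without explaining how $\psi$ lands in $\Gamma^+$ rather than merely in $\Phi^+$. So your instinct that this step requires care was well placed; the difficulty is that under the hypotheses as written the step cannot be completed, and the paper shares the same gap.
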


\proof
	
To prove item \ref{cond:alsotype} it is sufficient to check that the conditions of Definition~\ref{def:type} hold. 
Conditions \ref{cond:type:intersection} and~\ref{cond:type:bot} of Definition~\ref{def:type} are straightforward.
Since $\Phi^- = (\Phi\upharpoonright\Sigma)^-$, conditions \ref{cond:type:negconj} and~\ref{cond:type:negdisj} clearly hold.
For condition~\ref{cond:type:implication}, suppose that $\varphi\to\psi\in  (\Psi\upharpoonright \Sigma)^+ $.
Since $\Sigma$ is closed under subformulas, $\varphi, \psi \in \Sigma$ and, since $\Psi$ is a type it follows that either $\varphi \in \Psi^-$ or $\psi \in \Psi^+$.
By definition either $\varphi \in \Psi^-$ or $\psi \in \Psi^+ \cap \Sigma$.
The proofs for conditions \ref{cond:type:posconj} and~\ref{cond:type:posdisj} of Definition~\ref{def:type} are similar and left to the reader. 

Regarding item~\ref{ItCrossTrans} of the lemma, on one side, $\Gamma \sqsubT \Phi \peqT \Psi$ means that $\Gamma^+ \subseteq \Phi^+ \subseteq \Psi^+$ and $\Gamma^- = \Phi^- \supseteq \Psi^-$. Therefore $\Gamma^+ \subseteq \Psi^+$ and $\Psi^- \subseteq \Gamma^-$ so $\Gamma \peqT \Psi$. On the other side  $\Gamma \peqT \Phi \sqsubT \Psi$ means by definition that $\Gamma^+ \subseteq \Psi^+ \subseteq \Psi^+$ and $\Gamma^- \supseteq \Phi^- = \Psi^-$. It follows that $\Gamma^+ \subseteq \Psi^+$ and $\Psi^+ \subseteq \Gamma^-$ so $\Gamma \peqT \Psi$.

For item~\ref{ItRestrictSqsub} we consider the conditions of Definition~\ref{compatible}:\medskip

\noindent \ref{ItCompOne} If $\tnext \psi \in \Gamma^+$, from $\Gamma \sqsubT  \Phi\ST\Psi$ we conclude that $\tnext \psi \in \Phi^+$ and $\psi \in \Psi^+$. Since ${\rm sub}(\Gamma^+) \subseteq \Sigma$ then $\psi \in \Sigma$. Therefore $\psi \in \Psi^+ \cap \Sigma$ so $\psi \in \left(\Psi \upharpoonright \Sigma\right)^+$.\medskip

\noindent \ref{ItCompTwo} If $\tnext \psi \in \Gamma^-$, from $\Gamma \sqsubT  \Phi\ST\Psi$ we conclude that $\tnext \psi \in \Phi^-$ and $\psi \in \Psi^-$, which by definition means that $\psi \in \left(\Psi \upharpoonright \Sigma  \right)^-$.
\medskip

\noindent \ref{ItCompThree} If $\diam \psi \in \Gamma^+$, since ${\rm sub}(\Gamma^+) \subseteq \Sigma$ then $\diam \psi, \psi \in \Sigma$. From $\Gamma \sqsubT  \Phi\ST\Psi$ we conclude that $\diam \psi \in \Phi^+$ and either $\psi \in \Gamma^+$ or $\diam \psi \in \Psi^+$. From this it follows that either $\psi \in \Gamma^+$ or  $\diam \psi \in \Psi^+ \cap \Sigma$ (which means that $\diam \psi \in \left(\Psi \upharpoonright \Sigma\right)^+$).
\medskip

\noindent \ref{ItCompFour} If $\diam \psi \in \Gamma^-$, from $\Gamma \sqsubT  \Phi\ST\Psi$ we conclude that $\diam \psi \in \Phi^-$, $\psi \in \Phi^-$ (thus $\psi \in \Gamma^-$) and $\diam \psi \in \Psi^-$. As a consequence it follows that $\psi \in \Gamma^-$ and $\diam \psi \in \left(\Psi \upharpoonright \Sigma\right)^-$.
\endproof

We may also wish to `forget' temporal formulas that have been realized. To make this precise, let ${\rm sup}(\varphi)$ denote the set of {\em super}-formulas of $\varphi$. Say that a formula $\varphi$ is a {\em temporal formula} if it is of the forms $\tnext \psi$ or $\diam \psi$, and if $\Phi$ is a set of formulas, say that $\varphi \in \Phi$ is {\em maximal in $\Phi$} if it does not have any temporal superformulas in $\Phi$. Then, define $\remove\Phi\varphi =(\Phi^-,\Phi^+\setminus {\rm sup}(\varphi))$.

\begin{lemma}\label{LemDelete}
	Suppose that $\Phi \ST \Psi$.
	\begin{enumerate}
		
		\item\label{ItDeleteCirc}  If $\tnext\varphi$ is maximal in $\Phi^+$, then $\Phi \ST \remove \Psi {\tnext\varphi}$ .
		
		\item\label{ItDeleteDiam}  If $\diam \varphi $ is maximal in $\Phi^+$ and $\varphi \in \Phi^+$, then $\Phi \ST \remove \Psi {\diam \varphi}$.
		
	\end{enumerate}
\end{lemma}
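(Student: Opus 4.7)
The plan is to verify, for each item, the four clauses of Definition~\ref{compatible} applied to the pair $(\Phi,\remove\Psi\chi)$, where $\chi$ denotes $\tnext\varphi$ in item~\ref{ItDeleteCirc} and $\diam\varphi$ in item~\ref{ItDeleteDiam}. Since by construction $(\remove\Psi\chi)^- = \Psi^-$, clauses~\ref{ItCompTwo} and~\ref{ItCompFour} transfer verbatim from the hypothesis $\Phi \ST \Psi$ with no further work. So everything reduces to clauses~\ref{ItCompOne} and~\ref{ItCompThree}, whose right-hand sides now demand membership in $\Psi^+ \setminus {\rm sup}(\chi)$.

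Both positive clauses rest on the same bookkeeping observation: if $\tnext\theta$ or $\diam\theta$ lies in $\Phi^+$ and has $\chi$ as a subformula, then the whole temporal formula belongs to ${\rm sup}(\chi)\cap\Phi^+$, since prefixing by a tense operator preserves the subformula relation. By the maximality of $\chi$ in $\Phi^+$ no proper temporal superformula of $\chi$ is available in $\Phi^+$, so this temporal formula must equal $\chi$ itself. A short case split on the outer shape of $\chi$ then eliminates most possibilities: mismatched outer operators handle the cross cases ($\tnext\theta = \diam\varphi$ or $\diam\theta = \tnext\varphi$ are impossible), and the case $\tnext\theta = \tnext\varphi$ would force $\varphi \in {\rm sup}(\tnext\varphi)$, which cannot occur since no formula is a strict subformula of itself.

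Concretely, for clause~\ref{ItCompOne} assume $\tnext\theta\in\Phi^+$; then $\theta\in\Psi^+$ by $\Phi\ST\Psi$, and the observation above excludes $\theta\in{\rm sup}(\chi)$ in both items, giving $\theta\in(\remove\Psi\chi)^+$. For clause~\ref{ItCompThree} assume $\diam\theta\in\Phi^+$; then $\Phi\ST\Psi$ yields either $\theta\in\Phi^+$ (the first disjunct is immediate) or $\diam\theta\in\Psi^+$, in which case the bookkeeping either produces $\diam\theta\notin{\rm sup}(\chi)$, so that $\diam\theta\in(\remove\Psi\chi)^+$, or forces $\diam\theta = \chi$.

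The only hurdle is this last residual subcase, which occurs only in item~\ref{ItDeleteDiam}: when $\diam\theta = \diam\varphi$ one has $\theta=\varphi$, and neither disjunct of clause~\ref{ItCompThree} would be available were it not for the additional hypothesis $\varphi\in\Phi^+$ explicitly assumed in item~\ref{ItDeleteDiam}, which restores the first disjunct $\theta\in\Phi^+$. This also explains, in a conceptually clean way, why the side hypothesis $\varphi\in\Phi^+$ is present only in item~\ref{ItDeleteDiam} and is unnecessary in item~\ref{ItDeleteCirc}, where the shape mismatch $\tnext\theta\neq\diam\varphi$ and the self-subformula obstruction already suffice.
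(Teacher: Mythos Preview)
Your proof is correct and follows essentially the same approach as the paper's, namely verifying the four clauses of Definition~\ref{compatible} and using maximality to rule out membership in ${\rm sup}(\chi)$. The paper only spells out item~\ref{ItDeleteCirc} and declares item~\ref{ItDeleteDiam} ``analogous''; your treatment is in fact more careful, handling both items uniformly and isolating precisely the residual subcase $\diam\theta = \diam\varphi$ that explains why the side hypothesis $\varphi\in\Phi^+$ is required only in item~\ref{ItDeleteDiam}.
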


\proof
We prove the first item; the second is analogous. Assuming that $\tnext\varphi$ is maximal in $\Phi^+$, let us check that the four conditions of Definition~\ref{compatible} hold.
\medskip

\noindent\ref{ItCompOne} If $\tnext \theta \in \Phi^+$, since $\Phi \ST \Psi$ then $\theta \in \Psi^+$. Moreover, since $\tnext \theta \in \Phi^+$ then $\theta \not \in {\rm sup}(\tnext \varphi)$ by maximality of $\tnext\varphi$. Therefore, $\theta \in \left(\Psi^+\setminus {\rm sup}(\tnext \varphi)\right) =\left(\remove \Psi {\tnext\varphi} \right)^+$.\medskip

\noindent\ref{ItCompTwo} If $\tnext \theta \in \Phi^-$, since $\Phi \ST \Psi$ then $\theta \in \Psi^- = \left(\remove \Psi {\tnext\varphi}\right)^-$.\medskip

\noindent\ref{ItCompThree} If $\diam \theta \in \Phi^+$, since $\Phi \ST \Psi$ then either $\theta \in \Phi^+$ or $\diam \theta \in \Psi^+$. Moreover, since $\tnext\varphi$ is maximal in $\Phi^+$ and $\diam \theta \in \Phi^+$, it follows that $\diam \theta \not \in {\rm sup}(\tnext \varphi)$. Therefore, $\diam \theta \in \left(\Psi^+\setminus {\rm sup}(\tnext \varphi)\right) =\left(\remove \Psi {\tnext\varphi} \right)^+$.\medskip

\noindent\ref{ItCompFour} If $\diam \theta \in \Phi^-$, since $\Phi \ST \Psi$ is sensible then $ \diam \theta \in \Psi^-= \left(\remove \Psi {\tnext\varphi}\right)^-$.
\endproof
	
In the next section, we will use Theorem \ref{TheoITLc} and our results on two-sided types to show that, for $\ubox$-free formulas, validity over the class of topological spaces can be reduced to validity over the class of dynamic posets.

\section{Conservativity of the $\ubox$-free fragment}\label{SecCons}

Our goal for this section is to show that the temporal logics of dynamic posets and of dynamical systems coincide with respect to $\ubox$-free formulas:

\begin{theorem}\label{TheoConservativity}
A $\ubox$-free formula $\varphi$ is satisfiable (falsifiable) over the class of dynamic posets if and only if it is satisfiable (falsifiable) over the class of dynamical systems.
\end{theorem}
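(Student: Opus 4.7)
The plan is to route the proof through quasimodels. The easy direction is immediate: any dynamic poset $(W,\peq,S)$ becomes a dynamical system via its up-set topology $\mathcal T_\peq$, and monotonicity of $S$ in the Kripke sense coincides with continuity with respect to $\mathcal T_\peq$, so any model satisfying (resp.\ falsifying) $\varphi$ over a dynamic poset witnesses the same over the class of dynamical systems.

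For the nontrivial direction, suppose $\varphi \in \mathcal L_\diam$ is satisfiable over dynamical systems. By Theorem \ref{TheoITLc} there is a finite, possibly non-deterministic ${\rm sub}(\varphi)$-quasimodel $\mathcal Q = (W,\peq,S,\ell)$ with $\varphi \in \ell^+(w_0)$ for some $w_0 \in W$. I would construct from $\mathcal Q$ a deterministic quasimodel $\mathcal Q^\ast$ that still satisfies $\varphi$; once this is done, Lemma \ref{LemTruth} applied to $\mathcal Q^\ast$ produces a dynamic poset model of $\varphi$, as required. The falsifiability case will be fully dual, starting instead from $\varphi \in \ell^-(w_0)$.

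The concrete construction I have in mind is to let $W^\ast$ consist of the $\omega$-realizing infinite $S$-paths $\alpha = (\alpha_0,\alpha_1,\ldots)$ in $\mathcal Q$, where $\alpha$ is $\omega$-realizing if for every $n$ and every $\diam\psi \in \ell^+(\alpha_n)$ there exists $m \geq n$ with $\psi \in \ell^+(\alpha_m)$. Set $\alpha \peq^\ast \beta$ iff $\alpha_n \peq \beta_n$ for every $n$, let $S^\ast$ be the shift $(\alpha_0,\alpha_1,\ldots)\mapsto(\alpha_1,\alpha_2,\ldots)$, and let $\ell^\ast(\alpha) = \ell(\alpha_0)$. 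Most of the required quasimodel conditions are then routine: $\peq^\ast$ is a partial order, $\ell^\ast$ inherits $\peqT$-monotonicity from $\ell$, $S^\ast$ is deterministic and serial, the shift is forward-confluent with respect to the pointwise order, sensibility of $S^\ast$ follows from sensibility of $S$ in $\mathcal Q$ via the clause at position $0$, and $\omega$-sensibility of $S^\ast$ is exactly the $\omega$-realizing condition.

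The main obstacle is verifying the implication-witness clause of Definition \ref{frame}: given $\alpha \in W^\ast$ with $\varphi \to \psi \in \ell^-(\alpha_0)$, one must exhibit some $\beta \in W^\ast$ (hence $\omega$-realizing) with $\alpha \peq^\ast \beta$, $\varphi \in \ell^+(\beta_0)$, and $\psi \in \ell^-(\beta_0)$. The first step is easy: pick $v \in W$ with $\alpha_0 \peq v$ witnessing the implication in $\mathcal Q$. The difficulty is to extend $v$ to an $\omega$-realizing path $\beta$ whose $n$-th term dominates $\alpha_n$ for every $n$. Forward confluence of $S$ inductively produces some $\beta_n \seq \alpha_n$ with $\beta_n \mathrel S \beta_{n+1}$, but this raw construction need not be $\omega$-realizing, and because $\ell^+(\beta_n) \supseteq \ell^+(\alpha_n)$ by monotonicity, $\beta_n$ typically carries strictly more pending $\diam$-formulas than $\alpha_n$. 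I would resolve this by a fairness/round-robin scheduling: at each stage, simultaneously apply forward confluence to keep $\alpha_n \peq \beta_n$ and invoke $\omega$-sensibility of $\mathcal Q$ to commit to realizing one currently outstanding $\diam$-formula; finiteness of $W$ and of ${\rm sub}(\varphi)$ makes the scheduling feasible, and Lemma \ref{LemmRestrict} and Lemma \ref{LemDelete} can be used to keep the bookkeeping of types under control. Once $\mathcal Q^\ast$ is certified as a deterministic quasimodel, Lemma \ref{LemTruth} delivers the dynamic poset model, with $\varphi$ holding at any $\alpha \in W^\ast$ having $\alpha_0 = w_0$.
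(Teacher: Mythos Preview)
Your overall strategy---reduce to quasimodels via Theorem \ref{TheoITLc}, determinise, then apply Lemma \ref{LemTruth}---matches the paper's. But the determinisation you propose does not work: the set $W^\ast$ of $\omega$-realizing infinite paths with the \emph{pointwise} order $\peq^\ast$ is in general not a labelled frame, because the implication-witness clause can fail.

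Here is a concrete obstruction. Take $W=\{w,u,z\}$ with $w\peq u$ (and no other nontrivial relations), $S$ given by $w\mathrel S w$, $u\mathrel S u$, $u\mathrel S z$, $z\mathrel S z$, and labels over $\Sigma=\{p,\diam p,q,r,q\to r\}$ with $\ell^+(w)=\varnothing$, $\ell^+(u)=\{\diam p,q\}$, $\ell^+(z)=\{p,\diam p,q\to r\}$ and the complements in $\ell^-$. One checks directly that this is a forward-confluent, sensible, $\omega$-sensible labelled frame, hence a quasimodel. Now $\alpha=(w,w,\ldots)\in W^\ast$ and $q\to r\in\ell^{\ast-}(\alpha)$. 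Any witness $\beta\in W^\ast$ with $\alpha\peq^\ast\beta$ must have $\beta_0=u$; but $w\not\peq z$, so the only $S$-successor of $u$ lying above $w$ is $u$ itself, forcing $\beta=(u,u,\ldots)$, which never realises $\diam p$ and hence is not in $W^\ast$. Your round-robin scheduler cannot fix this: forward confluence and $\omega$-sensibility pull in incompatible directions, and finiteness of $W$ and $\Sigma$ does not reconcile them. The paper's remark that the infinite-path limit yields a topological space rather than a poset is exactly this phenomenon: with the prefix topology the implication clause only asks for an open \emph{neighbourhood}, not a single dominating path, and that weaker demand can be met.

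The paper's fix is to replace infinite paths by finite \emph{terminal typed paths}, using the two-sided types so that realised temporal formulas can be deleted (Lemmas \ref{LemmRestrict} and \ref{LemDelete}) and every typed path extends to a terminal one (Lemma \ref{LemTerminal}). The order $\widehat\peq$ then only compares $\alpha$ and $\beta$ along the length of the \emph{shorter} path, so once $\beta$ has dominated $\alpha$ on that finite segment it is free to wander off and discharge its remaining $\diam$-obligations. That asymmetry is precisely the missing idea in your construction.
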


We will show this by `unwinding' a quasimodel to produce a dynamic poset.

\subsection{Weak limit models}

The unwinding procedure is similar to that in \cite{FernandezITLc}. There, the points of the `limit model' obtained from a quasimodel are those infinite paths satisfying all $\diam$-formulas in their labels. However, in order to obtain a poset rather than a topological space, we will instead work with finite paths.

\begin{definition}\label{def:term-path}
If $\mathcal Q = (W,{\peq},S,\ell)$ is a quasimodel, we say that a {\em path (on $\mathcal Q$)} is a sequence $(w_i)_{i<n} \subseteq W$ such that $w_i \mathrel S w_{i+1}$ for all $i<n-1$. We define a {\em typed path (on $\mathcal Q$)} to be a sequence $((w_i,\Phi_i))_{i < n}$ such that
\begin{enumerate}[label=(\alph*)]

\item $(w_i)_{i<n}$ is a path,

\item for all $i< n$, $\Phi_i\sqsubT \ell(w_i)$, and

\item for all $i < n - 1$, $\Phi_i \ST \Phi_{i+1}$.

\end{enumerate}
We say that $((w_i,\Phi_i))_{i < n}$ is {\em proper} if ${\rm sub}(\Phi^+_{i + 1}) \subseteq {\rm sub}(\Phi^+_{i})$ for all $i<n-1$, and {\em terminal} if $\Phi^+_{n-1} = \varnothing$.
\end{definition}

Note that we allow $\Phi_i\sqsubT \ell(w_i)$ and not only $\Phi_i = \ell(w_i)$. This will allow us to use finite paths, as temporal formulas can be `forgotten' once they have been realized.
\begin{definition}

We define the {\em weak limit model $\widehat{\mathcal Q}$} of $\mathcal Q$ as follows:

\begin{enumerate}

\item Define $\widehat W$ to be the set of terminal typed paths on $\mathcal Q$ together with the empty path, which we denote $\epsilon$.

\item For $\alpha = ((w_i,\Phi_i))_{i<n}$, $\beta= ((v_i,\Psi_i))_{i<m} \in \widehat W$, define $\alpha \mathrel{\widehat \peq} \beta$ if $n\leq m$ and for all $i<n$, $w_i \peq v_i$ and $\Phi_i \peqT \Psi_i$.

\item Define $\widehat S(((w_i,\Phi_i))_{i<n}) = ((w_{i+1},\Phi_{i+1}))_{i<n-1}$; note that $\widehat S (\epsilon) = \epsilon$.

\item If $n>0$, define $\widehat \ell (((w_i,\Phi_i))_{i<n}) = \Phi_0$. Then, set
$\widehat \ell^- (\epsilon) = \bigcup_{w\in W} \ell^{-}(w)$ and
$\widehat \ell^+ (\epsilon) = \varnothing$.

\end{enumerate}
\end{definition}

The structure $\widehat {\mathcal Q}$ we have just defined is always a deterministic quasimodel. Let us first show that it is deterministic.

\begin{lemma}\label{LemIsDP}
	If $\mathcal Q = \left( W, \peq, S, \ell\right)$ is a quasimodel then $\left({\widehat W },{\widehat \peq}, {\widehat S} \right)$ is a dynamic poset.
\end{lemma}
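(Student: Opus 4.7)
The plan is to verify two things: that $\widehat \peq$ is a partial order on $\widehat W$, and that $\widehat S$ is a well-defined monotone function from $\widehat W$ to itself, which by the characterization of continuity on up-set topologies from the earlier lemma on continuous/open maps on posets suffices to make $(\widehat W, \widehat \peq, \widehat S)$ a dynamic poset.

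For the partial-order part, I would check the three axioms coordinate-wise, noting first that $\peqT$ is itself antisymmetric (if $\Phi \peqT \Psi$ and $\Psi \peqT \Phi$ the two set-inclusions in the definition of $\peqT$ collapse to equalities on both components). Reflexivity and transitivity of $\widehat \peq$ then follow immediately from the corresponding properties of $\peq$ on $W$ and $\peqT$ on $\type{}$, together with the trivial arithmetic of path lengths. For antisymmetry, if $\alpha \widehat \peq \beta$ and $\beta \widehat \peq \alpha$ then the two length conditions $|\alpha| \le |\beta|$ and $|\beta| \le |\alpha|$ force $|\alpha| = |\beta|$, after which pointwise antisymmetry of $\peq$ and $\peqT$ yields $\alpha = \beta$. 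The empty path $\epsilon$ is handled by the fact that $|\epsilon| = 0$ is a bottom element for length.

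For $\widehat S$, I first check that the shifted sequence $((w_{i+1},\Phi_{i+1}))_{i<n-1}$ is again a terminal typed path whenever the original is: being a path is preserved because $w_i \mathrel S w_{i+1}$ for $i<n-1$ immediately gives $w_{i+1}\mathrel S w_{i+2}$ for $i<n-2$; the type conditions $\Phi_i \sqsubT \ell(w_i)$ and $\Phi_i \ST \Phi_{i+1}$ are preserved by the same reindexing; and terminality is preserved because the last coordinate $\Phi_{n-1}$, with $\Phi^+_{n-1}=\varnothing$, remains the final coordinate after dropping the initial pair. The degenerate cases $|\alpha|\le 1$ and $\alpha=\epsilon$ both collapse to $\widehat S(\alpha)=\epsilon$, which is in $\widehat W$ by definition.

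Monotonicity of $\widehat S$ is then just a matter of reindexing: if $\alpha = ((w_i,\Phi_i))_{i<n}$ and $\beta = ((v_i,\Psi_i))_{i<m}$ with $\alpha \widehat \peq \beta$, then $n\le m$ gives $n-1\le m-1$, and the coordinate-wise inequalities $w_{i}\peq v_i$ and $\Phi_i\peqT \Psi_i$ for $i<n$ yield the same inequalities for the shifted indices $1\le i<n$, which is exactly what is needed for $\widehat S(\alpha)\widehat\peq \widehat S(\beta)$. The case $\alpha=\epsilon$ is trivial since $\widehat S(\epsilon)=\epsilon \widehat\peq \widehat S(\beta)$ for every $\beta$. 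I do not expect a genuine obstacle here; the proof is essentially bookkeeping, with the only mildly delicate point being the observation that the lexicographic-like definition of $\widehat \peq$ (allowing $|\alpha|<|\beta|$) remains antisymmetric because mutual $\widehat\peq$ forces equal lengths.
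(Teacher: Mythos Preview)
Your proposal is correct and follows essentially the same approach as the paper: verify that $\widehat\peq$ is a partial order using the componentwise orders, and that $\widehat S$ is a well-defined monotone function via the obvious reindexing, with the empty path handled separately. You are in fact more explicit than the paper in checking that $\widehat S$ lands in $\widehat W$ (i.e., that the shifted sequence remains a terminal typed path), which the paper leaves implicit.
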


\begin{proof}
We have to prove the following:
\medskip

\ignore{
\noindent \textbf{${\widehat S}$ is sensible}: If $\alpha = (w_i,\Phi_i)_{i<n}$, $\beta = (v_{i},\Psi_{i})_{i < m} \in {\widehat W}$ are such that $\alpha \mathrel {\widehat S} \beta$, then by definition of $\widehat S$, $\Psi_0=\Phi_1$, which by Definition \ref{def:term-path} implies that $\Phi_0 \ST \Psi_0$.
\medskip
}

\noindent\textbf{${\widehat \peq}$ is a partial order on ${\widehat W}$:} This follows easily from the fact that $\peq$ and $\peqT$ are both partial orders.
\medskip

\ignore{
\textit{Reflexivity:} Let $\alpha = (w_i,\Phi_i)_{i<n} \in {\widehat W}$. Then, since both $\peq$ and $\peqT$ are reflexive, we see that $w_i\peq w_i$ and $\Phi_i \peqT \Phi_i$ for all $i<n$, hence $\alpha \widehat \peq \alpha$.

\textit{Antisymmetry: }take $(w_i,\Phi_i)_{i<n}$, $(v_i,\Psi_i))_{i<m} \in {\widehat W}$ be such that $(w_i,\Phi_i)_{i<n} {\widehat \peq} (v_i,\Psi_i))_{i<m}$ and $(v_i,\Psi_i))_{i<m}$ ${\widehat \peq} (w_i,\Phi_i)_{i<n}$. From the definition of ${\widehat \peq}$ it follows that $m = n$, $w_i \peq v_i$, $v_i \peq w_i$, $\Phi_i \peq \Psi_i$ and $\Psi_i \peq \Phi_i$, for all $i < n$. By the definition $\peq$ we conclude that $v_i = w_i$, $\Phi_i = \Psi_i$, for all $i < n$. Hence, $(w_i,\Phi_i)_{i<n}$, $(v_i,\Psi_i))_{i<m}$.

\textit{Transitivity:} $(w_i,\Phi_i)_{i<n}$, $(v_i,\Psi_i)_{i<m}$, $(u_i,\Omega_i)_{i<s} \in {\widehat W}$ be such that $(w_i,\Phi_i)_{i<n} {\widehat \peq} (v_i,\Psi_i)_{i<m}$ and $ (v_i,\Psi_i)_{i<m}$ ${\widehat \peq}(u_i,\Omega_i)_{i<s}$. Thanks to the definition of ${\widehat \peq}$ it follows that $n \le m \le s$; for all $i < n$, $w_i\peq v_i$ and $\Phi_i \peq  \Psi_i$ and for all $i < m$, $v_i \peq u_i$ and $\Psi_i \peq \Omega_i$. Since $\peq$ is a partial order we conclude that $n\le s$ and for all $i < n$ both $w_i \peq u_i$ and $\Psi_i \peq \Omega_i$, that is, $(w_i,\Phi_i)_{i<n} {\widehat \peq} (u_i,\Omega_i)_{i<s}$.\\}

\noindent \textbf{${\widehat S}$ is a function:} This is clear since $\widehat S(\alpha)$ is defined by removing the first element of $\alpha$ if it exists, otherwise $\widehat S(\alpha) = \alpha$, and thus $\widehat S(\alpha)$ is uniquely defined for all $\alpha \in \widehat W$.
\ignore{
we proceed by contradiction. Let us take $(w_i,\Phi_i)_{i < n}$, $(v_i,\Psi_i)_{i < m}$ and $(u_i,\Omega_i)_{i < s} \in {\widehat W}$ be such that $(w_i,\Phi_i)_{i < n} {\widehat S} (v_i,\Psi_i)_{i < m}$, $(w_i,\Phi_i)_{i < n} {\widehat S} (u_i,\Omega_i)_{i < s}$ but $ (v_i,\Psi_i)_{i < m} \not= (u_i,\Omega_i)_{i < s}$. By the definition of ${\widehat S}$, $n-1 = s = m$}
\medskip

\noindent \textbf{$\widehat S$ is monotone:}
If $((w_i,\Phi_i))_{i<n} \mathrel{\widehat \peq } ((v_i,\Psi_i))_{i<m}$, then $n\leq m$ and for all $i<n$, $w_i \peq v_1$ and $\Phi_i \peqT \Psi_i$.
If $n>0$, then we also have $n-1 \leq m-1$ and for all $i < n-1$, $w_{i+1} \peq v_{i+1}$ and $\Phi_{i+1} \peqT \Psi_{i+1}$, i.e.,
\[\widehat S (\alpha) = ((w_{i+1},\Phi_{i+1}))_{i<n-1} \mathrel{\widehat \peq} ((v_{i+1},\Psi_{i+1}))_{i<m -1} = \widehat S (\beta),\]
as needed. If $n = 0$ then $\alpha = \epsilon$, so that $\widehat S(\alpha) = \epsilon$ and clearly $\epsilon \mathrel {\widehat \peq} \beta$.
\end{proof}

\subsection{Constructing terminal paths}

Next, we must show that $\widehat {\mathcal Q}$ has `enough' paths. First we show that we can iterate the forward-confluence property.

\begin{lemma}\label{LemmPath}
If $\mathcal Q$ is a quasimodel, $((w_i,\Phi_i))_{i<n}$ is a typed path in $\mathcal Q$, and $w_0 \peq v_0$, then there is a typed path $((v_i,\Psi_i))_{i<n}$ such that $w_i \peq v_i$ and $\Phi_i \peqT \Psi_i$ for all $i<n$.
\end{lemma}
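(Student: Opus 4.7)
The plan is to proceed by induction on $n$, repeatedly applying the forward-confluence property of $S$ to lift the path pointwise above $v_0$, and simply taking $\Psi_i = \ell(v_i)$ at every step.

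For the base case $n = 1$, we are given $v_0$ and set $\Psi_0 = \ell(v_0)$; then $\Psi_0 \sqsubT \ell(v_0)$ trivially, and from $\Phi_0 \sqsubT \ell(w_0)$ together with $\ell(w_0) \peqT \ell(v_0)$ (the monotonicity clause of Definition \ref{frame}) we apply Lemma \ref{LemmRestrict}.\ref{ItCrossTrans} to conclude $\Phi_0 \peqT \Psi_0$.

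For the inductive step, suppose we have already built $((v_i,\Psi_i))_{i \le k}$ with $w_i \peq v_i$, $\Psi_i = \ell(v_i)$ and $\Phi_i \peqT \Psi_i$ for $i \le k$, and the $\ST$-chain condition. Since $w_k \peq v_k$ and $w_k \mathrel S w_{k+1}$, the forward-confluence of $S$ in $\mathcal Q$ yields some $v_{k+1}$ with $v_k \mathrel S v_{k+1}$ and $w_{k+1} \peq v_{k+1}$. Set $\Psi_{k+1} = \ell(v_{k+1})$. Sensibility of $\mathcal Q$ gives $\ell(v_k) \ST \ell(v_{k+1})$, which is exactly $\Psi_k \ST \Psi_{k+1}$. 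The required $\Phi_{k+1} \peqT \Psi_{k+1}$ is obtained exactly as in the base case: from $\Phi_{k+1} \sqsubT \ell(w_{k+1})$ and $\ell(w_{k+1}) \peqT \ell(v_{k+1})$ via Lemma \ref{LemmRestrict}.\ref{ItCrossTrans}. Finally $\Psi_{k+1} \sqsubT \ell(v_{k+1})$ is immediate.

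There is no real obstacle here: the only thing to be slightly careful about is that a typed path requires $\Psi_i \sqsubT \ell(v_i)$ (not equality) and $\Phi_i \peqT \Psi_i$, and these two requirements go in opposite directions in $\type{}$; choosing $\Psi_i = \ell(v_i)$ makes the first trivial and reduces the second to the cross-transitivity fact already packaged in Lemma \ref{LemmRestrict}.\ref{ItCrossTrans}. No restriction to a finite subformula set is needed in this lemma, which is why Lemma \ref{LemmRestrict}.\ref{ItRestrictSqsub} is not invoked here; it will presumably be needed only when the path must additionally be made proper or terminal.
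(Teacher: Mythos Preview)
Your proof is correct and follows essentially the same approach as the paper: build the $v_i$ inductively via forward confluence, set $\Psi_i = \ell(v_i)$, and use sensibility together with Lemma~\ref{LemmRestrict}.\ref{ItCrossTrans} to verify the typed-path conditions. Your version is simply more explicit about the details that the paper leaves to the reader.
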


\proof
First we find $v_i$ by induction on $i$; $v_0$ is already given, and once we have found $v_i$, we use forward confluence to choose $v_{i+1}$ so that $v_i \mathrel S v_{i+1}$ and $w_{i+1} \peq v_{i+1}$. Then we set $\Psi_i = \ell(v_i)$; since $S$ is sensible, $\Psi_{i} \ST \Psi_{i+1}$, and by Lemma \ref{LemmRestrict}.\ref{ItCrossTrans}, $\Phi_n \peqT \Psi_n$.
\endproof

We want to prove that any point can be included in a terminal typed path. For this we will first show that we can work mostly with properly typed paths, thanks to the following.

\begin{lemma}\label{LemProperPath}
Let $\mathcal Q= (W,{\peq},S,\ell)$ be a quasimodel, $(w_i)_{i <n}$ be a path on $W$, and $\Phi_0 \sqsubseteq \ell (w_0)$. Then there exist $(\Phi_i)_{i<n}$ such that $((w_i,\Phi_i))_{i<n}$ is a properly typed path.
\end{lemma}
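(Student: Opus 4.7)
The plan is to construct the sequence $(\Phi_i)_{i<n}$ by recursion on $i$, starting from the given $\Phi_0 \sqsubT \ell(w_0)$ and repeatedly applying the restriction lemma (Lemma~\ref{LemmRestrict}) to keep the positive part shrinking in a controlled way. The guiding idea is that at each step, the natural candidate for $\Phi_{i+1}$ is the label $\ell(w_{i+1})$, which however may contain positive formulas introducing subformulas that are not already witnessed inside $\mathrm{sub}(\Phi_i^+)$. Restricting to $\Sigma_i := \mathrm{sub}(\Phi_i^+)$ will simultaneously ensure properness and allow us to invoke item \ref{ItRestrictSqsub} of Lemma~\ref{LemmRestrict}.

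Concretely, I would argue inductively. Suppose $\Phi_0,\ldots,\Phi_i$ have already been defined so that $\Phi_j \sqsubT \ell(w_j)$ for each $j\leq i$, $\Phi_j \ST \Phi_{j+1}$ for each $j<i$, and $\mathrm{sub}(\Phi_{j+1}^+) \subseteq \mathrm{sub}(\Phi_j^+)$ for $j<i$. Since $w_i \mathrel S w_{i+1}$, sensibility of $\mathcal Q$ yields $\ell(w_i) \ST \ell(w_{i+1})$. Let $\Sigma_i = \mathrm{sub}(\Phi_i^+)$, which is automatically closed under subformulas, and set
\[
\Phi_{i+1} \;=\; \ell(w_{i+1}) \upharpoonright \Sigma_i.
\]

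It then remains to verify three things. First, $\Phi_{i+1}$ is a type and $\Phi_{i+1} \sqsubT \ell(w_{i+1})$ by Lemma~\ref{LemmRestrict}.\ref{cond:alsotype} together with the defining equation of $\upharpoonright$. Second, to get $\Phi_i \ST \Phi_{i+1}$, I apply Lemma~\ref{LemmRestrict}.\ref{ItRestrictSqsub} with $\Gamma = \Phi_i$, $\Phi = \ell(w_i)$, $\Psi = \ell(w_{i+1})$ and $\Sigma = \Sigma_i$; the hypotheses hold because $\Phi_i \sqsubT \ell(w_i) \ST \ell(w_{i+1})$ and $\mathrm{sub}(\Gamma^+) = \mathrm{sub}(\Phi_i^+) = \Sigma_i$ by construction. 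Third, properness at step $i$ follows from $\Phi_{i+1}^+ \subseteq \Sigma_i = \mathrm{sub}(\Phi_i^+)$, hence $\mathrm{sub}(\Phi_{i+1}^+) \subseteq \mathrm{sub}(\Phi_i^+)$ since the right-hand side is closed under subformulas.

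There is no real obstacle here: the work has essentially been done in Lemma~\ref{LemmRestrict}, and the only design choice is the right restriction set $\Sigma_i$. The mild subtlety is that one has to restrict at each step, rather than once at the start, so that the positive component never introduces new subformulas relative to its predecessor; taking $\Sigma_i = \mathrm{sub}(\Phi_i^+)$ (and not, say, $\mathrm{sub}(\Phi_0^+)$) keeps the chain $\mathrm{sub}(\Phi_0^+) \supseteq \mathrm{sub}(\Phi_1^+) \supseteq \cdots$ weakly decreasing, which is exactly the properness condition of Definition~\ref{def:term-path}.
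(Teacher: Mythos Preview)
Your proposal is correct and follows exactly the same construction as the paper: recursively set $\Phi_{i+1} = \ell(w_{i+1}) \upharpoonright \mathrm{sub}(\Phi_i^+)$ and appeal to Lemma~\ref{LemmRestrict} (sensibility plus the restriction lemma) for the verification. Your write-up is simply more explicit about checking the three conditions and about why the restriction must be taken relative to $\mathrm{sub}(\Phi_i^+)$ at each step rather than to a fixed set.
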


\proof
For $i < n-1$ define recursively $\Phi_{i+1} = \ell (w_{i + 1}) \upharpoonright {\rm sub} ( \Phi^+_i ) $; by the assumption that $S$ is sensible and Lemma \ref{LemmRestrict}, $(\Phi_i,\Phi_{i+1})$ is sensible for each $i<n-1$. It is easy to see that $((w_i,\Phi_i))_{i<n}$ thus defined is proper.
\endproof

However, the properly typed paths we have constructed need not be terminal. This will typically require extending them to a long-enough path, as we do below.

\begin{lemma}\label{LemTerminal}
If $\mathcal Q$ is a quasimodel, then any non-empty typed path on $\mathcal Q$ can be extended to a terminal path.
\end{lemma}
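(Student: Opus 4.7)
My plan is to extend $\alpha = ((w_i, \Phi_i))_{i<n}$ step by step, progressively shrinking the positive part of the endpoint type until it becomes empty. Let $\Sigma = \mathrm{sub}(\Phi^+_{n-1})$, a finite set of formulas closed under subformulas; every new type I introduce will be a restriction of some $\ell(v)$ to $\Sigma$ (possibly further pruned by $\remove\cdot\psi$), so its positive part always stays inside $\Sigma$. By Lemma \ref{LemmRestrict}.\ref{cond:alsotype} these restrictions remain types, while Lemma \ref{LemmRestrict}.\ref{ItRestrictSqsub} together with sensibility of $\mathcal Q$ will transfer $\ST$-compatibility across each step.

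The procedure then branches on the current endpoint type $\Phi$.
\textbf{(a)} If no formula in $\Phi^+$ has top operator $\tnext$ or $\diam$, use seriality to pick some $w \mathrel S w'$ and append $(\ell^-(w'), \varnothing)$; this is $\sqsubT \ell(w')$, and $\Phi \ST (\ell^-(w'), \varnothing)$ holds because the positive clauses of $\ST$ are vacuous while the negative ones reduce to $\ell(w)\ST \ell(w')$. The extended path is terminal.
\textbf{(b)} If some $\tnext \varphi$ is temporally maximal in $\Phi^+$, pick $w\mathrel S w'$ and append $\remove{(\ell(w')\upharpoonright \Sigma)}{\tnext \varphi}$, combining Lemma \ref{LemmRestrict}.\ref{ItRestrictSqsub} with Lemma \ref{LemDelete}.\ref{ItDeleteCirc}.
\textbf{(c)} Similarly, if some $\diam \varphi$ is temporally maximal in $\Phi^+$ with $\varphi \in \Phi^+$, append $\remove{(\ell(w')\upharpoonright \Sigma)}{\diam \varphi}$ via Lemma \ref{LemDelete}.\ref{ItDeleteDiam}.
\textbf{(d)} If some $\diam \varphi$ is temporally maximal in $\Phi^+$ with $\varphi \notin \Phi^+$, invoke $\omega$-sensibility to obtain a sequence $w = v_0 \mathrel S v_1 \mathrel S \cdots \mathrel S v_k$ with $\varphi \in \ell^+(v_k)$, and append its intermediate points with types $\ell(v_i)\upharpoonright \Sigma$. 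At the new endpoint $\varphi \in \Sigma \cap \ell^+(v_k)$, so $\varphi$ lands in the positive part and case (c) becomes available.

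The main obstacle is proving termination, because cases (b) and (c) may re-introduce strictly smaller subformulas into $\Phi'^+$, and case (d) inserts arbitrarily many intermediate steps before any formula is actually removed. I would handle this with a well-founded measure, for instance the lexicographic pair $\mu(\Phi) = \langle u(\Phi),\, |\mathrm{sub}(\Phi^+)|\rangle$, where $u(\Phi)$ counts the unrealized eventualities $\diam \psi \in \Phi^+$ with $\psi \notin \Phi^+$. Case (d) strictly reduces $u$ by realizing the chosen eventuality, while cases (b) and (c)---when one chooses the \emph{minimal} $\Phi'$ compatible with $\Phi \ST \Phi'$ and $\Phi' \sqsubT \ell(w')$---preserve $u$ and strictly reduce $|\mathrm{sub}(\Phi^+)|$, since the removed temporal formula and all its super-formulas disappear from the subformula closure of the next positive type. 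Because $\mathrm{sub}(\Phi^+) \subseteq \Sigma$ is bounded, this lexicographic measure is well-founded, so after finitely many extensions we fall into case (a) and obtain the desired terminal extension of $\alpha$.
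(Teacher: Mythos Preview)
Your case analysis is essentially the paper's, but the termination argument does not work as stated.

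The problem is that you restrict always to the \emph{fixed} set $\Sigma = \mathrm{sub}(\Phi^+_{n-1})$. In case (d), the new endpoint type is $\ell(v_k)\upharpoonright\Sigma$, whose positive part $\ell^+(v_k)\cap\Sigma$ may contain eventualities $\diam\psi\in\Sigma$ that were absent from $\Phi^+$; if $\psi\notin\ell^+(v_k)$ these are unrealized, so $u$ need not drop even though $\diam\varphi$ has been realized. A similar failure hits your claim that cases (b) and (c) preserve $u$: with the minimal $\Phi'$, a formula like $\tnext(\diam r)\in\Phi^+$ forces $\diam r$ into $\Phi'^+$, possibly unrealized, so $u$ can go up there too. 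Hence the lexicographic measure $\langle u(\Phi),|\mathrm{sub}(\Phi^+)|\rangle$ is not well-founded along your procedure. Relatedly, because you always go back to the same $\Sigma$, a temporal formula deleted via $\remove\cdot\chi$ at one step may re-enter at the next, so $|\mathrm{sub}(\Phi^+)|$ alone is not monotone either.

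The paper avoids this by restricting not to a fixed $\Sigma$ but to the \emph{progressively shrinking} sets $\mathrm{sub}(\Phi^+_i)$, which is exactly what Lemma~\ref{LemProperPath} produces. Along a proper path the single measure $\|\Phi\|=|\mathrm{sub}(\Phi^+)|$ is non-increasing, and after removing a maximal temporal formula $\chi$ (together with all of $\mathrm{sup}(\chi)$) one has $\chi\notin\mathrm{sub}(\Phi'^+)$, so $\|\Phi\|$ strictly drops. This yields a straightforward induction on $\|\Phi_{m-1}\|$: in the $\tnext\psi$ case one takes a single serial step and deletes $\tnext\psi$; in the $\diam\psi$ case one first runs $\omega$-sensibility to reach a point where $\psi$ holds, takes one more serial step, and then deletes $\diam\psi$. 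No secondary $u$-component is needed. Your argument is repaired simply by replacing the fixed $\Sigma$ with $\mathrm{sub}$ of the current positive endpoint at each step, i.e.\ by invoking Lemma~\ref{LemProperPath} rather than a one-shot restriction.
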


\proof
Let $\mathcal Q= (W,{\peq},S,\ell)$ and $\alpha = ((w_i,\Phi_i))_{i<m}$ be any typed path on $\mathcal Q$. For a type $\Phi$, define $\|\Phi\| = |{\rm sub} (\Phi^+)|$.
We proceed to prove the claim by induction on $\|\Phi_{m-1}\|$. Consider first the case where $\Phi_{m-1}^+$ contains no temporal formulas; that is, formulas of the form $\tnext \psi$ or $\diam \psi$ for some $\psi$. In this case, using the seriality of $S$ choose $w_{m}$ such that $w_{m-1} \mathrel S w_{m}$, and define $\Phi_{m+1} = (\ell^-(w_{m});\varnothing)$; it is easy to see that $((w_i,\Phi_i))_{i\leq m}$ is a terminal path.

Otherwise, let $\varphi$ be a maximal temporal formula of $\Phi^+_{m-1}$, i.e., it does not appear as a proper subformula of any other temporal formula in $\Phi^+_{m-1}$.
We consider two sub-cases.

Assume first that $\varphi = \tnext \psi$. Then, by the seriality of $S$, we may choose $w_m$ so that $w_{m-1} \mathrel S w_{m}$.
Applying Lemma \ref{LemProperPath}, let $\widetilde \Phi_m$ be such that $((w_{m-1},\Phi_{m-1}), (w_m,\widetilde \Phi_m))$ is a properly typed path.
Setting $\Phi_m = \remove {\widetilde \Phi_m }{\tnext\psi}$, we see by Lemma \ref{LemDelete}.\ref{ItDeleteCirc} that
\[((w_{m-1},\Phi_{m-1}),(w_m,\Phi_m))\]
is a properly typed path, and $\| \Phi_m \| < \|  \Phi_{m-1} \|$, since the left-hand side does not count $\tnext\psi$. Thus we may apply the induction hypothesis to obtain a terminal typed path $((w_i,\Phi_i))_{i<n}$ extending $\alpha$.

Now consider the case where $\varphi = \diam \psi$. Since $S$ is $\omega$-sensible, there is a path
\[w_{m-1} \mathrel S w_m \mathrel S \ldots \mathrel S w_k\]
so that $\varphi \in \ell( w_k )$.
Using the seriality of $S$, choose $w_{k+1}$ so that $w_k \mathrel S w_{k+1}$.

By Lemma \ref{LemProperPath}, there are types $\Phi_i$ for $m\leq i \leq k$ and a type $\widetilde \Phi_{k+1}$ such that
\[((w_{m-1},\Phi_{m-1}), \ldots, (w_k,\Phi_k), (w_{k+1},\widetilde \Phi_{k+1}))\]
is a properly typed path. Then, define $\Phi_{k + 1} = \remove{\widetilde \Phi_{k+1} }{\diam \psi}$. Using Lemma \ref{LemDelete} we see that $(\Phi_k,\Phi_{k+1})$ is sensible; moreover, $\|  \Phi_{k+1} \| < \| \Phi_{m-1} \|$. Hence we can apply the induction hypothesis to obtain a terminal typed path $((w_i,\Phi_i))_{i <n}$ extending $\alpha$.
\endproof

\subsection{From weak limit models to models}

With this, we are ready to show that our unwinding is indeed a deterministic quasimodel.

\begin{lemma}\label{LemIsQuasi}
If $\mathcal Q$ is a quasimodel, then $\widehat{\mathcal Q}$ is a deterministic quasimodel.
\end{lemma}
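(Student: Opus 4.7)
The plan is to check each clause of Definition \ref{def:quasimodel} in turn. Lemma \ref{LemIsDP} already shows that $(\widehat W, \widehat \peq, \widehat S)$ is a dynamic poset, so $\widehat S$ is a monotone total function; this immediately gives seriality, determinism, and (since forward-confluence for a function reduces to monotonicity) forward-confluence. What remains is to check that $\widehat \ell$ lands in $\type{}$, that the labelled frame conditions of Definition \ref{frame} hold, and that $\widehat S$ is sensible and $\omega$-sensible.

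For the type condition on $\widehat \ell(\alpha)$, the non-empty case is immediate since $\widehat \ell(\alpha) = \Phi_0 \in \type{}$ by definition of typed path. For $\alpha = \epsilon$, I will verify directly that $(\bigcup_{w\in W}\ell^-(w), \varnothing)$ meets Definition \ref{def:type}: the positive clauses are vacuous, and the negative clauses for $\wedge$, $\vee$, and $\diam$ transfer from each $\ell(w)$ being a type. Monotonicity of $\widehat \ell$ with respect to $\widehat \peq$ is then built into the definition on non-empty paths and trivial in the case $\alpha = \epsilon$.

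The main work lies in the implication clause: given $\varphi \to \psi \in \widehat \ell^-(\alpha)$, I must produce a terminal typed path $\beta$ with $\alpha \widehat \peq \beta$, $\varphi \in \widehat \ell^+(\beta)$, and $\psi \in \widehat \ell^-(\beta)$. Unpacking the definition of $\widehat \ell^-$ yields some $w \in W$ with $\varphi \to \psi \in \ell^-(w)$ (taking $w = w_0$ if $\alpha$ is non-empty). The labelled frame condition on $\mathcal Q$ supplies $v_0 \seq w$ with $\varphi \in \ell^+(v_0)$ and $\psi \in \ell^-(v_0)$. I then invoke Lemma \ref{LemmPath} to build a typed path $((v_i, \ell(v_i)))_{i<n}$ shadowing $\alpha$; the required $\Phi_i \peqT \ell(v_i)$ follows from $v_0 \seq w_0$, $\Phi_0 \sqsubT \ell(w_0)$, and the monotonicity of $\ell$ along $\peq$. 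Finally I apply Lemma \ref{LemTerminal} to extend this initial segment to a terminal typed path $\beta \in \widehat W$. Since Lemma \ref{LemTerminal} only appends new entries, the comparisons $w_i \peq v_i$ and $\Phi_i \peqT \ell(v_i)$ survive for $i<n$, giving $\alpha \widehat \peq \beta$, while by construction $\varphi \in \widehat \ell^+(\beta)$ and $\psi \in \widehat \ell^-(\beta)$.

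For sensibility of $\widehat S$, when $\alpha = ((w_i, \Phi_i))_{i<n}$ has $n \geq 2$ the relation $\Phi_0 \ST \Phi_1$ is built into the definition of typed path. When $n \leq 1$ the target is $\widehat \ell(\epsilon)$, and I check the clauses of Definition \ref{compatible} by hand using seriality and sensibility of the underlying $S$: any $\tnext \psi$ or $\diam \psi$ occurring negatively transfers, via an $S$-successor provided by seriality, to $\psi$ or $\diam \psi$ in some $\ell^-(v) \subseteq \bigcup_w \ell^-(w)$. For $\omega$-sensibility, if $\diam \varphi \in \widehat \ell^+(\alpha) = \Phi_0^+$ with $\alpha$ non-empty, I iterate clause \ref{ItCompThree} of Definition \ref{compatible} along the path: since $\alpha$ is terminal, $\Phi_{n-1}^+ = \varnothing$, so the diamond cannot survive to the end, and some least $k < n-1$ must satisfy $\varphi \in \Phi_k^+$; then $\widehat S^k(\alpha)$ is the required witness. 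The hard part of the whole argument is the implication clause, as it is the only step that genuinely combines the forward-confluence machinery (Lemma \ref{LemmPath}) with the terminalization procedure (Lemma \ref{LemTerminal}) to place a new, $\widehat \peq$-comparable path into $\widehat W$.
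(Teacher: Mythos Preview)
Your proposal is correct and follows essentially the same route as the paper: invoke Lemma \ref{LemIsDP} for the dynamic poset structure, verify the labelled frame conditions (monotonicity and the implication clause via Lemmas \ref{LemmPath} and \ref{LemTerminal}), and check sensibility and $\omega$-sensibility directly from the definitions of typed and terminal paths. The only cosmetic differences are that you explicitly verify $\widehat\ell(\epsilon)\in\type{}$ (which the paper leaves implicit) and that for the negative $\diam$ clause in the sensibility check at $\epsilon$ you route through seriality, whereas the formula is already in $\bigcup_{w}\ell^-(w)$ without needing an $S$-successor; neither point affects correctness.
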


\proof
Let $\mathcal Q = (W,{\peq}, S, \ell)$. We have already seen in Lemma \ref{LemIsDP} that $(\widehat W,\widehat \peq, \widehat S )$ is a dynamic poset, so it remains to check that $(\widehat W,\widehat \peq, \widehat \ell)$ is a labelled frame and $\widehat S$ is sensible and $\omega$-sensible. Let $\alpha = \big ( (w_i,\Phi_i) \big ) _{i<n} \in \widehat W$.

First we must check that if $\alpha \mathrel{\widehat \peq} \beta$, then $\widehat \ell(\alpha) \peqT \widehat \ell(\beta)$. Consider two cases; if $n>0$, then $\beta$ is also of the form $(v_i,\Psi_i)_{i<m}$ with $m>0$ and by definition,
$\widehat \ell(\alpha) = \Phi_0 \peqT \Psi_0 = \widehat \ell( \beta ).$
Otherwise, $\alpha = \epsilon$, and it is clear from the definition of $\widehat \ell(\epsilon )$ that $\widehat \ell(\epsilon ) \peqT \widehat \ell( \beta )$ regardless of $\beta$.

Now assume that $\varphi \to \psi \in \widehat \ell^-(\alpha)$. If $n>0$, then since $\mathcal Q$ is a labeled frame, we can pick $v_0 \seq w_0$ with $ \varphi \in \ell^+(v_0)$ and $\psi \in \ell^-(v_0)$. Since $\Phi_0 \sqsubT \ell(w_0) \peqT \ell(v_0)$, by Lemma \ref{LemmRestrict}.\ref{ItCrossTrans}, $\Phi_0 \peqT \ell(v_0)$, so that by Lemma \ref{LemmPath}, there is a typed path $\beta' = ((v_i,\Psi_i))_{i<n}$ with $\Psi_0 = \ell(v_0)$ such that $w_i \peq v_i$ and $\Phi_i \peqT \Psi_i$ for all $i<n$. By Lemma \ref{LemTerminal}, we can extend $\beta'$ to a terminal path $\beta$. Then, it is easy to see that $\alpha \mathrel{\widehat\peq} \beta$, $ \varphi \in \widehat \ell^+( \beta )$, and $\psi \in \widehat \ell^-( \beta )$, as required.

To check that every pair in $\widehat S$ is sensible, consider two cases. If $\widehat S(\alpha) \not = \epsilon$, then $\alpha$ has length at least two, but since $\alpha$ is a typed path,
\[\widehat \ell(\alpha) = \Phi_0 \ST \Phi_1 = \widehat \ell \big ( \widehat S (\alpha) \big ) .\]
Otherwise, $\widehat S(\alpha) = \epsilon$; this means that either $\alpha = \epsilon$ and thus $\widehat \ell ^+ (\alpha) = \varnothing$, or $\alpha$ has length $1$, in which case since $\alpha$ is terminal, so we also have that $\widehat \ell ^+ (\alpha) = \varnothing$. In either case, there can be no temporal formula in $\widehat \ell ^+ (\alpha)$. Now, if $\tnext\varphi \in \widehat \ell ^- (\alpha)$, then $\tnext \varphi \in \ell ^- (w )$ for some $w \in W$, hence $\varphi \in \ell ^- S(v)$ for any $v$ with $w \mathrel S v$ (which exists since $S$ is serial), and thus $\varphi \in \widehat \ell^- (\epsilon)$.
Similarly, if $\diam\varphi \in \widehat \ell ^- (\alpha)$, then by Definition \ref{def:type}.\ref{cond:type:diam} $ \varphi \in  \ell ^- ( \alpha )$, so that $ \varphi \in \widehat \ell ^- (\epsilon)$.

Finally we check that $\widehat S$ is $\omega$-sensible. Suppose that $\diam \varphi \in \ell^+ (\alpha)$. This means that $\alpha \not = \epsilon$, so $\alpha$ is terminal, and hence $n>0$ and $\diam \alpha \not \in \Phi_{n-1}$. But this is only possible if $\varphi \in \Phi_i$ for some $i<n-1$, in which case $\varphi \in \widehat \ell \big ( \widehat S ^i (\alpha) \big ) $.
\endproof

Let us put all of our work together to prove our main result.

\proof[Proof of Theorem \ref{TheoConservativity}]
Suppose that $\varphi \in \mathcal L_\diam$ is satisfied (falsified) on a dynamical topological model. Then, by Theorem \ref{TheoITLc}, $\varphi$ is satisfied (falsified) on some point $w_\ast$ of a ${\rm sub}(\varphi)$-quasimodel $\mathcal Q = (W,{\peq},S, \ell )$. By Lemma \ref{LemIsQuasi}, $\widehat {\mathcal Q}$ is a deterministic quasimodel, and by Lemma \ref{LemTerminal}, $(w_\ast, \ell(w_\ast))$ can be extended to a terminal path $\alpha_\ast \in \widehat W$. By Lemma \ref{LemTruth}, $\alpha_\ast $ satisfies (falsifies) $\varphi$ on the dynamic poset model $(\widehat W,\widehat \peq, \widehat S, \val\cdot^{\widehat \ell})$.
\endproof

\section{Concluding Remarks}\label{SecConc}

We have proposed a natural `minimalist' intuitionistic temporal logic, $\logbasic$, along with possible extensions including Fischer Servi or constant domain axioms. We have seen that relational semantics validate the constant domain axiom, leading us to consider a wider class of models based on topological spaces, with a novel interpretation for `henceforth' based on invariant neighbourhoods. With this, we have shown that the logics $\logbasic$, $\logexp$, $\loghomeo$ and $\logpers$ are sound for the class of all dynamical systems, of all dynamical posets, of all open dynamical systems, and of all persistent dynamical posets, respectively, which we have used in order to prove that the logics are pairwise distinct.

Of course this immediately raises the question of completeness, which we have not addressed. Specifically, the following are left open.

\begin{question}
Are $\logbasic$ and $\logbasic _\ubox $ complete for the class of dynamical systems?
\end{question}

\begin{question}
Are $\logexp$, $\logbasic_\diam$ and $\logexp_\ubox$ complete for the class of dynamic posets?
\end{question}

\begin{question}
Are $\loghomeo$, $\loghomeo_\diam$ and $\loghomeo_\ubox$ complete for the class of open dynamical systems?
\end{question}

\begin{question}
Are $\logpers$, $\logpers_\diam$ and $\logpers_\ubox$ complete for the class of persistent dynamic posets?
\end{question}

Note that by Theorem \ref{TheoConservativity}, $\logbasic_\diam$ is complete for the class of dynamical systems if and only if it is complete for the class of dynamic posets, so thanks to the results we have shown here, proving topological completeness would give us Kripke completeness for free. It is likely that the techniques employed in \cite{dtlaxiom}, also based on non-deterministic quasimodels, could be adapted to the intuitionistic setting to obtain such a result. The completeness of $\loghomeo$ and $\logpers$ is likely to be a more difficult problem, as in these cases it is not even known if the set of valid formulas is computably enumerable.

\begin{question}
Are the sets of formulas of $\mathcal L$, $\mathcal L_\diam$, or $\mathcal L_\ubox$ valid over the class of all persistent dynamic posets computably enumerable?
\end{question}

\begin{question}
Are the sets of formulas of $\mathcal L$, $\mathcal L_\diam$, or $\mathcal L_\ubox$ valid over the class of all open dynamical systems computably enumerable?
\end{question}

In both cases a negative answer is possible, since that is the case for their classical counterparts \cite{wolter}.
Nevertheless, the proofs of non-axiomatizability in the classical case do not carry over to the intuitionistic setting in an obvious way, and these remain challenging open problems.
%


\bibliographystyle{plain}


\end{document}